\newtheorem{thm}{Theorem}[section]
\newtheorem{cor}[thm]{Corollary}
\newtheorem{prop}[thm]{Proposition}
\newtheorem{lem}[thm]{Lemma}
\newtheorem*{cla*}{Claim}
\newtheorem{defi}[thm]{Definition}
\theoremstyle{definition}
\newtheorem*{rem*}{Remark}
\newtheorem{exa}[thm]{Example}
\newtheorem*{ack}{Acknowledgment}
\newtheoremstyle{efronremark}{6pt}{6pt}{}{}{\itshape}{\quad}{ }{\thmnote{#3}}
\theoremstyle{efronremark}
\numberwithin{equation}{section}
\newcommand{\bs}[1]{\boldsymbol{#1} }
\newcommand{\mc}[1]{\mathcal{#1} }
\newcommand{\N}{\mathbb N}
\newcommand{\Z}{\mathbb Z}
\newcommand{\C}{\mathbb C}
\newcommand{\Sym}{\operatorname{\mathit{Sym}}}
\newcommand{\QSym}{\operatorname{\mathit{QSym}}}
\newcommand{\set}[1]{\left\{ #1 \right\}}
\newcommand{\id}{\operatorname{id}}
\newcommand{\Ch}{\operatorname{\mathit{Ch}}}
\newcommand{\sh}{\operatorname{sh}}
\newcommand{\osh}{\operatorname{osh}}
\newcommand{\ish}{\operatorname{ish}}
\newcommand{\D}{\operatorname{\mathit{D}}}
\newcommand{\DC}{\operatorname{\mathit{D^c}}}
\newcommand{\AD}{\operatorname{\mathit{AD}}}
\newcommand{\nAD}{\operatorname{\mathit{nAD}}}
\newcommand{\ND}{\operatorname{\mathit{ND^c}}}
\newcommand{\SCT}{\operatorname{SCT}}
\newcommand{\fS}{\mathfrak{S}}
\newcommand{\fs}{s}
\newcommand{\myif}{\quad \text {if }}
\newcommand{\spa}{\operatorname{span}}
\newcommand{\col}{\operatorname{col}}
\newcommand{\He}{H_n(0)}
\newcommand{\skc}{/ \! \! / }
\newcommand{\ab}{{\alpha  / \! \! / \beta}}
\newcommand{\parts}[2]{{(#1_1, \dots, #1_{#2})}}
\newcommand{\pleq}{\preceq}
\newcommand{\dleq}{\trianglelefteq}
\newcommand{\dlneq}{\vartriangleleft}
\newcommand{\supp}{\operatorname{supp}}
\newcommand{\End}{\operatorname{End}}
\newcommand{\att}{\rightsquigarrow}
\newcommand{\natt}{\not \rightsquigarrow}
\newcommand{\cell}{\square}
\newcommand{\sa}{\bs S_{\alpha}}
\newcommand{\sab}{\bs S_{\ab}}
\newcommand{\se}{\bs S_{\alpha,E}} 
\newcommand{\sse}{\bs S_{\ab,E}}
\title[Decomposition of 0-Hecke modules]{The decomposition of 0-Hecke modules associated to quasisymmetric Schur functions} 
\author{Sebastian König}
\address{
	Leibniz Universität Hannover \\	
	Institute of Algebra, Number Theory and Discrete Mathematics \\
	Welfengarten 1 \\
	30167 Hannover \\
	Germany
}
\email{sebastian.koenig@math.uni-hannover.de}
\keywords{0-Hecke algebra, composition tableau, quasisymmetric function, Schur function}
\subjclass[2010]{Primary 05E05, 20C08; Secondary  05E10}
\begin{document}

% todo list

%\makeatletter
%\providecommand\@dotsep{5}
%\makeatother
%\listoftodos\relax
%
%\newpage

\begin{abstract} 	
	Recently Tewari and van Willigenburg constructed modules of the 0-Hecke algebra that are mapped to the quasisymmetric Schur functions by the quasisymmetric characteristic and decomposed them into a direct sum of certain submodules. We show that these submodules are indecomposable by determining their endomorphism rings.	 
\end{abstract}
	
\maketitle

\section{Introduction}

Since the 19th century mathematicians have been interested in the Schur fuctions $s_\lambda$ and their various properties. For example, they form an orthonormal basis of $\Sym$, the Hopf algebra of symmetric functions and are the images of the irreducible characters of the symmetric groups under the characteristic map \cite{Stanley1999}. 
The symmetric functions are contained in the Hopf algebra $\QSym$ of quasisymmetric functions defined in 1984 \cite{Gessel1984}. 
An introduction to $\QSym$ can be found in \cite{Grinberg2014}.

There is a representation theoretic interpretation of $\QSym$ as well.
The 0-Hecke algebra $\He$ is a deformation of the group algebra $\C\fS_n$ of the symmetric group obtained by replacing the generators $(i,i+1)$ of $\fS_n$ by projections $\pi_i$ satisfying the same braid relations.
Let $\mc G_0\left( \He \right)$ denote the Grothendieck group of the finitely generated $H_n(0)$-modules and $\mc G:=\bigoplus_{n \geq 0}\mc G_0\left( \He \right)$. 
The connection to $\QSym$ was given in \cite{Duchamp1996} by defining an algebra isomorphism $\Ch\colon \mc G \to \QSym$ called quasisymmetric characteristic. 

As $\Sym$ is contained in $\QSym$, one may ask whether there are quasisymmetric analogues of the Schur functions.
One proposal are the quasisymmetric Schur functions  $\mc S_\alpha$ \cite{Haglund2011}. They form a basis of $\QSym$ and nicely refine the Schur functions via
\begin{align*}
s_\lambda = \sum_{\widetilde \alpha = \lambda} \mc S_\alpha
\end{align*}
where $\lambda$ is a partition and the sum runs over all compositions $\alpha$ that rearrange $\lambda$  \cite{Haglund2011} (see \autoref{sec:compositions_and_composition_tableaux} for definitions).
In \cite{Bessenrodt2011} skew quasisymmetric Schur functions $\mc S_\ab$ were defined and a Littlewood-Richardson rule for expressing them in the basis of quasisymmetric Schur functions was proved.

Another basis of $\QSym$ sharing properties with the Schur functions is given by the dual immaculate functions \cite{Berg2014}. Indecomposable 0-Hecke modules whose images under $\Ch$ are the dual immaculate functions were defined in \cite{Berg2015}.

In \cite{Tewari2015} Tewari and van Willigenburg constructed modules $\sa$ of the 0-Hecke algebra that are mapped to $\mc S_\alpha$ by $\Ch$. Each $\sa$ has a $\C$-basis parametrized  by a set of tableaux. By using an equivalence relation, they divided this set into equivalence classes, obtained a submodule $\se$ of $\sa$ for each such equivalence class $E$ and decomposed $\sa$ as $\sa = \bigoplus_E \se$.
In the same way they defined and decomposed skew modules $\sab$ whose image under $\Ch$ is $\mc S_\ab$.

This article is mainly concerned with the modules $\sa$ and $\se$. 
In \cite{Tewari2015}, for a special equivalence class $E_\alpha$ it was shown
that $\bs S_{\alpha, E_\alpha}$ is indecomposable. Yet, the question of the indecomposability of the $\se$ in general remained open.
The goal of this paper is to answer this question. We show that for each $\se$ the ring of $\He$-endomorphisms is $\C\id$ so that $\se$ is indecomposable. As a consequence, $\sa = \bigoplus_E \se$ is a decomposition into indecomposable submodules.

The structure of the paper is as follows. In \autoref{sec:preliminaries} we introduce the combinatorial and algebraic background and then review the modules $\sab$ and $\sse$. \autoref{sec:0-hecke_operation_and_chains} is devoted to a related $\He$-operation on chains of a composition poset.
From this we obtain an argument crucial for proving the main results in \autoref{sec:endomorphism_ring_of_0-Hecke_modules}.

\section{Background}
\label{sec:preliminaries}

We set $\N := \set{1,2,\dots}$ and always assume that $n\in \N$. For $a,b\in \Z$ we define the \emph{discrete interval} $[a,b]:=\set{c\in \Z\mid a\leq c \leq b}$ and may use the shorthand $[a] := [1,a]$. For a set $X$, $\spa_\C X$ is the $\C$-vector space with basis $X$.

\subsection{Symmetric groups and 0-Hecke algebras}

The \emph{symmetric group} $\fS_n$ is the group of all permutations of the set $[n]$. We proceed by reviewing $\fS_n$ as Coxeter group. More details can be found in \cite{Bjorner2006}.

As a Coxeter group $\fS_n$ is generated by the adjacent transpositions $\fs_i := (i,i+1)$ for  $i= 1,\dots, n-1$ which satisfy
\begin{alignat*}{3}
\fs_i^2 &= 1,   			\\%&\text{for } i = 1,\dots, n-1, \\
\fs_i\fs_{i+1}\fs_i &= \fs_{i+1}\fs_i\fs_{i+1}, 	\\%\quad &\text{for } i = 1,\dots, n-1.
\fs_i\fs_j &= \fs_j \fs_i 		\ \text{if } |i-j|\geq 2. 
\end{alignat*}
The latter two relations are called \emph{braid relations}.
Let $\sigma \in \fS_n$. We can write $\sigma$ as product $\sigma = s_{j_k} \cdots s_{j_1}$.
If $k$ is minimal among such expressions, $s_{j_k} \cdots s_{j_1}$ is a \emph{reduced word} for $\sigma$ and $l(\sigma) := k$ is the \emph{length} of $\sigma$. 

The \emph{support} of $\sigma$ is $\supp(\sigma) =\{ i\in [n-1] \mid \text{$\fs_i$ appears in a reduced word of $\sigma$}\}$.
One assertion of the \emph{word property} of $\fS_n$ \cite[Theorem 3.3.1]{Bjorner2006} is that a reduced word of $\sigma$ can be transformed into any other reduced word of $\sigma$ by applying a sequence of braid relations. 
Thus, for each reduced word of $\sigma$ the set of indices occurring in it is $\supp(\sigma)$.

Let $\sigma,\tau \in \fS_n$. The \emph{left weak Bruhat order} $\leq_L$ is the partial order on $\fS_n$ given by
\begin{align*}
\sigma \leq_L \tau \iff 
\begin{aligned}
&\tau = \fs_{i_k} \cdots \fs_{i_1} \sigma, \\
&l(\fs_{i_r} \cdots \fs_{i_1} \sigma) = l(\sigma) + r \text{ for } r=1,\dots ,k.
\end{aligned}
\end{align*}
In the sequel we often drop the adjective \emph{weak}. The following \autoref{thm:properties_bruhat_order} gathers immediate consequences of the definition.

\begin{prop} 
		\label{thm:properties_bruhat_order}
	Let $\sigma,\tau \in \fS_n$.
	\begin{compactenum}
		\item $\sigma \leq_L \tau \iff l(\tau\sigma^{-1}) = l(\tau) - l(\sigma)$.
		\item  If $\sigma\leq_L \tau$ then the reduced words for $\tau\sigma^{-1}$ are in bijection with saturated chains in the left Bruhat poset from $\sigma$ to $\tau$ via 
		\begin{align*}
		\fs_{i_k} \cdots \fs_{i_1} \quad \longleftrightarrow \quad \sigma \lessdot_L \fs_{i_1} \sigma \lessdot_L \fs_{i_2}\fs_{i_1} \sigma \lessdot_L\dots \lessdot_L \fs_{i_k} \cdots \fs_{i_1}\sigma = \tau.
		\end{align*}
		\item The left Bruhat poset $(\fS_n, \leq_L)$ is graded by the length function.
	\end{compactenum}
\end{prop}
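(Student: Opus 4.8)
The plan is to deduce all three parts from two standard length facts for the Coxeter group $\fS_n$ (see \cite{Bjorner2006}): subadditivity $l(xy)\leq l(x)+l(y)$, and the fact that left multiplication by a generator changes length by exactly $\pm 1$, i.e.\ $l(\fs_i\sigma)=l(\sigma)\pm 1$. I would also use $l(\sigma)=l(\sigma^{-1})$.

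For (1), I would first prove the forward implication. Suppose $\sigma\leq_L\tau$, say $\tau=\fs_{i_k}\cdots\fs_{i_1}\sigma$ with $l(\fs_{i_r}\cdots\fs_{i_1}\sigma)=l(\sigma)+r$ for all $r$; taking $r=k$ gives $l(\tau)=l(\sigma)+k$. Since $\tau\sigma^{-1}=\fs_{i_k}\cdots\fs_{i_1}$ is a word of length $k$, we get $l(\tau\sigma^{-1})\leq k$, while subadditivity applied to $\tau=(\tau\sigma^{-1})\sigma$ gives $l(\tau\sigma^{-1})\geq l(\tau)-l(\sigma)=k$; hence $l(\tau\sigma^{-1})=k=l(\tau)-l(\sigma)$. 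For the converse, set $k:=l(\tau)-l(\sigma)=l(\tau\sigma^{-1})$, pick a reduced word $\tau\sigma^{-1}=\fs_{i_k}\cdots\fs_{i_1}$, and write $w_r:=\fs_{i_r}\cdots\fs_{i_1}\sigma$, so $w_0=\sigma$ and $w_k=\tau$. Each step changes the length by exactly $\pm 1$, yet the total change from $w_0$ to $w_k$ is $l(\tau)-l(\sigma)=k$ over $k$ steps; the only way to realise this is for every step to raise the length by $1$, which is precisely the defining condition for $\sigma\leq_L\tau$.

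For (2), I would first record a characterization of covers, which also yields (3). Reading off the length increments in the definition shows that $a\leq_L b$ forces $l(a)\leq l(b)$, with equality only if $a=b$, so $\leq_L$ strictly increases length along strict relations. Given a cover $w\lessdot_L w'$, write $w'=\fs_{i_k}\cdots\fs_{i_1}w$ with increasing lengths and $k\geq 1$: if $k\geq 2$ then $\fs_{i_1}w$ lies strictly between $w$ and $w'$, contradicting the cover; hence $k=1$, so $w'=\fs_{i_1}w$ with $l(w')=l(w)+1$. Conversely, any single-generator length-raising step is a cover, since no element can have length strictly between $l(w)$ and $l(w)+1$. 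With this, a reduced word $\fs_{i_k}\cdots\fs_{i_1}$ of $\tau\sigma^{-1}$ produces exactly the displayed chain, each partial product raising length by $1$ (by (1)) and hence being a cover; conversely a saturated chain $\sigma=w_0\lessdot_L\cdots\lessdot_L w_k=\tau$ determines at each step the unique generator $\fs_{i_r}$ with $w_r=\fs_{i_r}w_{r-1}$, whose product is a reduced word for $\tau\sigma^{-1}$. These assignments are mutually inverse, giving the bijection. Finally, (3) is immediate: the cover characterization shows $l$ increases by exactly $1$ along every cover, so $l$ is a rank function and $(\fS_n,\leq_L)$ is graded by length.

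The argument is entirely elementary; the only point needing care is invoking the exact $\pm 1$ length change under a generator in the converse of (1), as this is what rules out length oscillation along the chain and forces every step to be length-increasing.
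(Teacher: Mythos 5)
Your proof is correct. The paper itself gives no proof of this proposition—it is stated as a collection of "immediate consequences of the definition"—and your argument supplies exactly the standard details one would use to justify that claim: deducing everything from subadditivity of $l$ and the fact that $l(\fs_i\sigma)=l(\sigma)\pm 1$, with the converse of (1) correctly hinging on the parity/counting argument that rules out length oscillation, and the cover characterization $w\lessdot_L \fs_i w$ with $l(\fs_i w)=l(w)+1$ yielding both (2) and (3).
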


\begin{thm}[{\cite[Corollary 3.2.2]{Bjorner2006}}]
	\label{thm:bruhat_interval}
	Let $\sigma,\tau\in \fS_n$. The interval in left Bruhat order $[\sigma, \tau] := \left\{\rho \in \fS_n \mid \sigma \leq_L \rho \leq_L  \tau \right\}$  is a graded lattice with rank function $\rho \mapsto l(\rho \sigma^{-1})$.
\end{thm}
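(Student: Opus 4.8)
The statement has two parts: that $[\sigma,\tau]$ is graded by $\rho \mapsto l(\rho\sigma^{-1})$, and that it is a lattice. The plan is to dispatch the grading directly from \autoref{thm:properties_bruhat_order}, and to obtain the lattice property by passing to the whole poset $(\fS_n,\leq_L)$, which I will argue is a lattice; the interval then inherits the structure as a sublattice.

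\emph{Grading.} Let $\rho \in [\sigma,\tau]$. Applying \autoref{thm:properties_bruhat_order}(2) to the subinterval $[\sigma,\rho]$, every saturated chain from $\sigma$ to $\rho$ corresponds to a reduced word of $\rho\sigma^{-1}$, so all such chains have the common length $l(\rho\sigma^{-1})$. In particular all maximal chains of $[\sigma,\tau]$ have length $l(\tau\sigma^{-1})$, so the interval is graded, and the rank of $\rho$ is the common length $l(\rho\sigma^{-1})$ of a saturated chain from $\sigma$ up to $\rho$. By \autoref{thm:properties_bruhat_order}(1) this equals $l(\rho)-l(\sigma)$, exhibiting the rank as the restriction of the global length grading (\autoref{thm:properties_bruhat_order}(3)) shifted by $-l(\sigma)$.

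\emph{Lattice.} First I would reduce to the whole poset: since $\fS_n$ is finite, if $(\fS_n,\leq_L)$ is a lattice then every interval $[\sigma,\tau]$ is a sublattice, because for $a,b\in[\sigma,\tau]$ one has $\sigma \leq_L a\wedge b$ and $a\vee b \leq_L \tau$, so both $a\wedge b$ and $a\vee b$ again lie in $[\sigma,\tau]$. Next, as $\fS_n$ is finite with greatest element the longest permutation $w_0$, it suffices to show it is a meet-semilattice: in a finite poset with a top in which every pair has a meet, joins automatically exist, the join of $a,b$ being the meet of their (nonempty) set of common upper bounds. To compute meets I would use the inversion-set model. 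Writing $\operatorname{Inv}(w) = \set{(i,j) : i<j,\ w(i)>w(j)}$, an analysis of how left multiplication by a generator $\fs_i$ alters $\operatorname{Inv}$ — it swaps the values $i,i+1$, changing only the single pair occupying their two positions — shows that going up a cover adds exactly one inversion; hence $\sigma \leq_L \tau \iff \operatorname{Inv}(\sigma)\subseteq \operatorname{Inv}(\tau)$, and the meet of $u,v$ is the element whose inversion set is the largest inversion set contained in $\operatorname{Inv}(u)\cap\operatorname{Inv}(v)$.

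The crux — and the step I expect to be the main obstacle — is the existence of meets, i.e.\ that among all inversion sets contained in $\operatorname{Inv}(u)\cap\operatorname{Inv}(v)$ there is a (necessarily unique) largest one. This is delicate precisely because the sets of the form $\operatorname{Inv}(w)$, namely the biconvex subsets $A$ of $\set{(i,j):i<j}$ characterized for $i<j<k$ by the conditions $(i,j),(j,k)\in A \Rightarrow (i,k)\in A$ and $(i,k)\in A \Rightarrow (i,j)\in A \text{ or } (j,k)\in A$, are not closed under intersection, so the meet cannot be read off directly from $\operatorname{Inv}(u)\cap\operatorname{Inv}(v)$. Establishing it requires a genuine analysis of the combinatorics of these biconvex sets — which is exactly the content of the cited result of Björner–Brenti — after which the grading and sublattice reductions above assemble into the full statement.
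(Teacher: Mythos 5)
The paper does not prove this statement at all: it is imported verbatim from Björner--Brenti \cite[Corollary 3.2.2]{Bjorner2006}, so there is no in-paper argument to compare yours against. Measured as a standalone proof, your proposal is a correct road map that stops exactly where the real work begins. The grading half is fine and genuinely follows from \autoref{thm:properties_bruhat_order}: covers inside an interval are covers in $\fS_n$, so maximal chains of $[\sigma,\tau]$ are saturated chains from $\sigma$ to $\tau$, all of length $l(\tau\sigma^{-1})$ by part (2), and the rank of $\rho$ is $l(\rho\sigma^{-1})$. The reductions in the lattice half (interval of a lattice is a sublattice; finite meet-semilattice with top is a lattice) are also correct.

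The gap is the one you name yourself: you never establish that among the inversion sets contained in $\operatorname{Inv}(u)\cap\operatorname{Inv}(v)$ there is a unique largest one, and that is the entire content of the theorem beyond routine bookkeeping; deferring it to ``the cited result of Björner--Brenti'' makes your argument circular as a proof of that citation. A second, smaller gap: your cover analysis (left multiplication by $\fs_i$ toggles exactly the one pair of positions carrying the values $i,i+1$) only yields the implication $\sigma\leq_L\tau \Rightarrow \operatorname{Inv}(\sigma)\subseteq\operatorname{Inv}(\tau)$; the converse, which you also use, needs an extra step (given $\operatorname{Inv}(\sigma)\subsetneq\operatorname{Inv}(\tau)$, exhibit an $i$ with $l(\fs_i\sigma)>l(\sigma)$ and $\operatorname{Inv}(\fs_i\sigma)\subseteq\operatorname{Inv}(\tau)$). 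Since the paper itself treats this theorem as a black box, relying on the citation is perfectly acceptable here --- but then the honest form of your write-up is ``quoted from \cite{Bjorner2006}'' rather than a proof sketch that bottoms out in the same citation.
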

 
Next, we define the 0-Hecke algebra $\He$. We use the presentation as in \cite{Tewari2015} and refer  to \cite[Ch. 1]{Mathas1999} for details.

\begin{defi}
	\label{thm:0-Hecke algebra}
	The \emph{0-Hecke algebra} $\He$ is the unital associative $\C$-algebra generated by the elements $\pi_1,\pi_2,\dots ,\pi_{n-1}$ subject to relations
	\begin{align*}
	\pi_i^2 &= \pi_i, \\
	\pi_i\pi_{i+1}\pi_i &= \pi_{i+1} \pi_{i} \pi_{i+1}, \\
	\pi_i \pi_j &= \pi_j \pi_i  \text{ if } |i-j| \geq 2. 	 
	\end{align*}
\end{defi}

Note that the $\pi_i$ are projections satisfying the same braid relations as the $\fs_i$. 
Let $\sigma \in \fS_n$. We define $\pi_\sigma := \pi_{j_k} \cdots \pi_{j_1}$ where $\fs_{j_k} \cdots \fs_{j_1}$ is a reduced word for $\sigma$. The word property ensures that this is well defined. Multiplication is given by
\begin{align*}
\pi_i \pi_\sigma = 
\begin{cases}
\pi_{\fs_i \sigma} & \text{if}\ l(\fs_i \sigma ) > l(\sigma) \\
\pi_{\sigma}       & \text{if}\ l(\fs_i \sigma ) < l(\sigma) \\
\end{cases}
\end{align*}
for $i = 1,\dots, n-1$. 
%\cite[Lemma 1.12]{Mathas1999}. 
As a consequence, $\set{\pi_{\tau} \mid \tau \in \fS_n}$ spans $\He$ over $\C$. One can  also  show that it is a $\C$-basis of $\He$.% \cite[Theorem 1.13]{Mathas1999}.

\subsection{Compositions and composition tableaux}
\label{sec:compositions_and_composition_tableaux}

A \emph{composition} $\alpha = \parts{\alpha}l$ is a finite sequence of positive integers. 
The \emph{length} and the \emph{size} of $\alpha$ are given by $l(\alpha):= l$ and $|\alpha| := \sum_{i=1}^l \alpha_i$, respectively.
The $\alpha_i$ are called \emph{parts} of $\alpha$. If $\alpha$ has size $n$, $\alpha$ is called \emph{composition of $n$} and we write $\alpha \vDash n$.
A \emph{partition} is a composition whose parts are weakly decreasing. 
We write $\lambda \vdash n$ if $\lambda$ is a partition of size $n$.
For a composition $\alpha$ we denote the partition obtained by sorting the parts of $\alpha$ in decreasing order by $\widetilde \alpha$. The \emph{empty composition} $\emptyset$ is the unique composition of length and size $0$.

\begin{exa} 
	For $\alpha=(1,4,3)\vDash 8$ we have  $\widetilde \alpha = (4,3,1)\vdash 8$.
\end{exa}

A \emph{cell} $(i,j)$ is an element of $\N \times \N$.
A finite set of cells is called \emph{diagram}. Diagrams are visualized in English notation. That is, for each cell $(i,j)$ of a diagram we draw a box at position $(i,j)$ in matrix coordinates.
The \emph{diagram} of $\alpha\vDash n$ is the set $\set{(i,j)\in \N \times \N \mid  i \leq l(\alpha), j \leq \alpha_i}$. So, we display the diagram of $\alpha$ by putting $\alpha_i$ boxes in row $i$ where the top row has index~$1$.
We may identify $\alpha$ with its diagram.

\begin{exa}
	\begin{align*}
		(1,4,3) = 
		\begin{ytableau} 
		 \\
			&&& \\
			&&\\
		\end{ytableau}
	\end{align*}
\end{exa}

Next, we will introduce standard composition tableaux and a related poset of compositions which arose in \cite{Bessenrodt2011}.

\begin{defi}
	The \emph{composition poset} $\mc L_c$ is the set of all compositions together with the partial order $\leq_c$ given as the transitive closure of the following covering relation. For compositions $\alpha$ and  $\beta = ( \beta_1, \dots,  \beta_l)$
	\begin{align*}
		 \beta \lessdot_c  \alpha  \iff  \begin{aligned}
		 \alpha &= (1, \beta_1, \dots,  \beta_l) \text{ or}\\
		 \alpha &= ( \beta_1, \dots,   \beta_k +1 , \dots,  \beta_l) \text{ and $ \beta_i \neq  \beta_k$ for all $i<k$.}
		\end{aligned} 
	\end{align*}	
\end{defi}  

In other words, $\beta$ is covered by $\alpha$ in $\mc L_c$ if and only if the diagram of $\alpha$ can be obtained from the diagram of $\beta$ by adding a box as new first row or appending a box to a row which is the topmost row of its length in $\beta$.

\begin{exa}
	\label{exa:chain_in_composition_poset}
	\[	
		\ydiagram{1,2} \lessdot_c 
		\ydiagram{2,2} \lessdot_c  
		\ydiagram{3,2} \lessdot_c 
		\ydiagram{3,3} \lessdot_c 
		\ydiagram{1,3,3} \lessdot_c
		\ydiagram{1,4,3} 
	\]
\end{exa}

Let $\alpha$ and $\beta$ be two compositions such that $\beta \leq_c \alpha$. In this situation we always assume that the diagram of $\beta$ is moved to the bottom of the diagram of $\alpha$, and we define the \emph{skew composition diagram}  (or \emph{skew shape}) $\ab$ to consist of all cells of $\alpha$ which are not contained in $\beta$.
Moreover, we define $\osh(\ab)= \alpha$ and $\ish(\ab)= \beta$ as the \emph{outer} and the \emph{inner shape} of $\ab$, respectively. 
The size of a skew shape is $|\ab| := |\alpha| - |\beta|$.
If $\beta = \emptyset$ then $\ab=\alpha$ is an ordinary composition diagram and we call $\ab$ \emph{straight}.

\begin{exa} In the following the cells of the inner shape are gray.
	\[
		(1,4,3) \skc (1,2) =
		\begin{ytableau} 
			 \\
			*(gray)&		&& \\
			*(gray)&*(gray)	&\\
		\end{ytableau}
	\]	
\end{exa}

Let $D$ be a diagram. A \emph{tableau} $T$ of shape $D$ is a map $T\colon D \to \N$. It is visualized by filling each $(i,j)\in D$ with $T(i,j)$.

\begin{defi}
	Let $\ab$ be a skew shape of size $n$. A \emph{standard composition tableau} (SCT) of shape $\ab$ is a bijective filling $T\colon \ab \to [n]$ satisfying the following conditions:
	\begin{compactenum}
		\item
		The entries are decreasing in each row from left to right.
		\item
		The entries are increasing in the first column from top to bottom.		
		\item
		(Triple rule). Set $T(i,j):= \infty$ for all $(i,j)\in \beta$.
		If $(j,k)\in \alpha \skc \beta$ and $(i,k-1)\in \alpha$ such that $j > i$ and $T(j,k) < T(i,k-1)$ then $(i,k) \in \alpha$ and $T(j,k) < T(i,k)$.

	\end{compactenum}
\end{defi}

The set of composition tableaux of shape $\ab$ is denoted with $\SCT(\ab)$. 
For an SCT $T$ we write $\sh(T)$ for its shape. 
The notions of outer and inner shape are carried over from $\sh(T)$ to $T$.
We call $T$ \emph{straight} if its shape is straight.

\begin{exa}
	\label{exa:sct}
	 A SCT is shown below.
	\begin{align*}
	T = 
	\begin{ytableau} 
		2 \\ 
		*(gray) & 5 & 4 & 1 \\ 
		*(gray) & *(gray) &  3 \\ 
		\end{ytableau}
	\end{align*}
	We have $\osh(T)= (1,4,3)$ and 	$\ish(T) = (1,2)$.
\end{exa}

Standard composition tableaux encode saturated chains of $\mathcal{L}_c$ in the following way.

\begin{prop}[see {\cite[Proposition 2.11]{Bessenrodt2011}}]
	\label{thm:sct_and_chains}
	Let $\ab$ be a skew composition of size $n$. For $T\in \SCT(\ab)$, 
	\begin{align*}
		\beta = \alpha^n  \lessdot_c \alpha^{n-1}  \lessdot_c \dots  \lessdot_c \alpha^0 =\alpha
	\end{align*}
	given by
	\begin{align}  \label{eq:tableau_chain}
	\alpha^n = \beta, \quad \alpha^{k-1} = \alpha^k \cup T^{-1}(k) \quad \text{for} \quad k= 1,\dots, n
	\end{align}
	is a saturated chain in $\mathcal{L}_c$. Moreover, we obtain a bijection from $\SCT(\ab)$ to the set of saturated chains in $\mathcal{L}_c$ from $\beta$ to $\alpha$ by mapping each tableau of $\SCT(\ab)$ to its corresponding chain given by \eqref{eq:tableau_chain}. 	 
\end{prop}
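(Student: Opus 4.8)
The plan is to prove first that the forward assignment $T \mapsto (\alpha^k)_{0 \le k \le n}$ is well defined, namely that deleting the cells $T^{-1}(1), T^{-1}(2), \dots, T^{-1}(n)$ in this order carves $\alpha$ down to $\beta$ through a saturated chain, and then to check that this assignment is a bijection by reading the construction backwards. Write $(r,c) := T^{-1}(k)$ for the cell carrying the least entry still present in $\alpha^{k-1}$. The first observation is that $(r,c)$ is the rightmost surviving cell of its row: by the row-decreasing condition every cell further right in row $r$ carries a smaller entry and has already been removed. Consequently each deletion keeps every row solid and left-justified, so every $\alpha^k$ is again a composition diagram, and it remains only to certify that each step is a cover $\alpha^k \lessdot_c \alpha^{k-1}$. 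When $c = 1$ the cell is an outer first-column cell, so $r$ lies in the top $l(\alpha) - l(\beta)$ rows; the column-increasing condition then forces every outer cell above it in column $1$ to carry a smaller entry, whence all rows $r' < r$, lying above $\beta$ and hence free of inner cells, are already empty. Thus row $r$ is the topmost nonempty row and reduces to the single cell $(r,1)$, so its deletion realizes the cover $\alpha^{k-1} = (1, \alpha^k_1, \dots)$.

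The substantial case is $c > 1$, and it is exactly here that the triple rule is needed. Deleting $(r,c)$ shortens row $r$ from length $c$ to $c-1$, so I must show that row $r$ is the \emph{topmost} row of length $c-1$ in $\alpha^k$, which is precisely the hypothesis of the second covering relation. Suppose, for contradiction, that some row $r' < r$ also had length $c-1$ in $\alpha^k$, so that $(r', c-1) \in \alpha^k$ while $(r', c) \notin \alpha^k$. Since a surviving cell carries an entry exceeding $k$ and an inner cell is valued $\infty$, we have $T(r', c-1) > k = T(r,c)$. Applying the triple rule to the outer cell $(r,c)$ and the index $i = r'$ now forces $(r', c) \in \alpha$ with $T(r', c) > k$; but then $(r', c)$ survives into $\alpha^k$, contradicting that row $r'$ has length $c-1$. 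Hence no such $r'$ exists and the deletion is the cover appending a box to the topmost row of its length. This establishes that the forward assignment lands in the set of saturated chains from $\beta$ to $\alpha$.

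It remains to see the assignment is a bijection. Injectivity is clear, because the chain exhibits the deleted cell $T^{-1}(k)$ at each step and therefore recovers $T$. For surjectivity I would reverse the reading: given a saturated chain $\beta = \alpha^n \lessdot_c \cdots \lessdot_c \alpha^0 = \alpha$, label the cell $\alpha^{k-1} \setminus \alpha^k$ by $k$ and verify the three defining conditions of an SCT. Building the shape upward inserts cells with the decreasing labels $n, n-1, \dots, 1$. The row condition holds because appending $(r, c+1)$ by the second cover requires $(r,c)$ to be present beforehand, hence labelled earlier and larger; the column condition holds because every first-column cell must enter as a new first row via the first cover, and successively prepended first rows sit higher while carrying ever smaller labels; and the triple rule is the contrapositive of the ``topmost row of its length'' clause of the second cover, read at the instant $(j,k_0)$ is appended. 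As the two procedures are manifestly mutually inverse, the bijection follows.

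I expect the main obstacle to be the case $c > 1$ together with its converse: matching the triple rule precisely against the ``topmost row of its length'' clause of the second covering relation, and, in the backward direction, keeping track of the fact that prepending new first rows shifts row indices uniformly without reordering the existing rows, so that the column-increasing condition is read off correctly.
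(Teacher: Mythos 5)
Your proof is correct: the case analysis on the column of the deleted cell, the use of the triple rule to certify the ``topmost row of its length'' clause, and the reverse labelling argument for surjectivity are all sound. Note that the paper itself gives no proof of this proposition --- it is quoted from Bessenrodt, Luoto and van Willigenburg \cite[Proposition 2.11]{Bessenrodt2011} --- and your argument is essentially the standard one found there, so there is nothing to reconcile.
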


\begin{exa}
	The SCT from \autoref{exa:sct} 	corresponds to the chain from \autoref{exa:chain_in_composition_poset}.	
\end{exa}

From the perspective of \autoref{thm:sct_and_chains}, the triple rule reflects the fact that by adding a cell to a row of a composition diagram, a covering relation in $\mc L_c$ is established if and only if the row in question is the topmost row of its length.

Some of the upcoming notions already played a role in \cite{Tewari2015}.
Let $(i,j)$ and $(i',j')$ be two cells.
Define $r(i,j) := i$ and $c(i,j) := j$ the \emph{row} and the \emph{column} of $(i,j)$, respectively. 
We say that $(i,j)$ \emph{attacks} $(i',j')$  and write $(i,j)\att (i',j')$ if  $j=j'$ and $i\neq i'$ or $j = j'-1$ and $i<i'$.
That is, the two cells are distinct and appear either in the same column or in adjacent columns and $(i',j')$ is located strictly below and right of $(i,j)$.
 
Let $T$ be an SCT and $i,j\in T$ two entries.
We refer to the \emph{row} and the \emph{column} of $i$ in $T$ by $r_T(i) := r(T^{-1}(i))$ and $c_T(i):= c(T^{-1}(i))$, respectively. 
We say that $i$ \emph{attacks} $j$ in $T$ and write $i\att_T j$ if $T^{-1}(i) \att T^{-1}(j)$. The index $T$ may be omitted if it is clear from context.
 % We write $i\nei_T j$ if $T^{-1}(i)\nei T^{-1}(j).$ 
Note that $i\att_T j$ implies $i\neq j$. 

For two sets of cells $C_1, C_2 \subseteq \N^2$ we say $C_1$ attacks $C_2$ and write $C_1 \att C_2$ if there are cells $c_1\in C_1$ and $c_2 \in C_2$ such that $c_1 \att c_2$. 
If $c(c_1)\leq c(c_2)$ for all $c_1\in C_1, c_2\in c_2$, $C_1$ is called \emph{left} of $C_2$. If $c(c_1)< c(c_2)$ for all $c_1\in C_1, c_2\in c_2$, $C_1$ is \emph{strictly left} of $C_2$. In the same way we use these notions for sets of entries of an SCT.

\begin{exa}
	In the tableau from \autoref{exa:sct} we have $2\natt 3, 2\att \set{3,5}, 3\att 4$ and $3$ is left of $\set{1,4}$.
\end{exa}

\begin{defi}
	\label{thm:descents}
	 Let $T$ be an SCT of size $n$.
	\begin{compactenum}
		\item $\D(T) = \{i\in [n-1] \mid c(i) \leq c(i+1)\}$ is the \emph{descent set} of~$T$.
		\item $\AD(T) = \{i\in \D(T) \mid i\att i+1 \}$ is the \emph{set of attacking descents} of~$T$.
		\item $\nAD(T) = \{i\in \D(T) \mid i \notin \AD(T) \}$ is the \emph{set of non-attacking descents} of~$T$.
		\item[(1')] $\DC(T) = \{i\in [n-1] \mid c(i+1) < c(i)\} = [n-1]\setminus \D(T)$ is the \emph{ascent set} of $T$.
		\item[(2')] $\ND(T) = \{i\in \DC(T) \mid  i+1$ left neighbor of $i\}$ is the \emph{set of neighborly ascents} of $T$.
		%\item $\nND(T) = \{i\in \DC(T) \mid i \notin \ND(T) \}$ is the \emph{set of non-neighborly ascents} of $T$.
	\end{compactenum}
\end{defi}

\begin{exa}
Let $T$ be the tableau from \autoref{exa:sct}. Then
	$\D(T) = \{2,3\}$, 
	$\AD(T) = \{3\}$, 
	$\DC(T) =\{1,4\}$ and
	$\ND(T)  = \{4\}$. 
\end{exa}

\subsection{0-Hecke modules of standard composition tableaux}

In this subsection we introduce the skew 0-Hecke modules $\sab$ and $\sse$ and review related results from \cite{Tewari2015}. This includes the special cases $\sa$ and $\se$.

\begin{thm}[{\cite[Theorem 9.8]{Tewari2015}}]
	\label{thm:sct_modules}
	Let $\ab$ be a skew composition of size $n$. 
	Then $\sab:=\spa_\C \SCT(\ab)$ is a $\He$-module with respect to the following action. For $T\in \SCT(\ab)$ and $i=1,\dots, n-1$,
	\begin{align*}
	\pi_i T = 
	\begin{cases}
	T &\myif i \notin \D(T) \\
	0 &\myif i \in \AD(T) \\ 
	s_iT  &\myif i \in \nAD(T) 
	\end{cases}
	\end{align*}
	where $s_iT$ is the tableau obtained from $T$ by interchanging $i$ and $i+1$.
\end{thm}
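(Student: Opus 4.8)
The plan is to extend the three rules linearly to operators $\pi_1,\dots,\pi_{n-1}$ on $\spa_\C \SCT(\ab)$ and to check two things. First, that the action is \emph{well defined}, i.e.\ that $s_iT\in \SCT(\ab)$ whenever $i\in \nAD(T)$, so that the output genuinely lies in $\sab$. Second, that the operators satisfy the three defining relations of $\He$, namely $\pi_i^2=\pi_i$, the commutation $\pi_i\pi_j=\pi_j\pi_i$ for $|i-j|\geq 2$, and the braid relation $\pi_i\pi_{i+1}\pi_i=\pi_{i+1}\pi_i\pi_{i+1}$. Since every operator is prescribed on the basis $\SCT(\ab)$, it suffices to verify each relation on an arbitrary $T\in\SCT(\ab)$.

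For well-definedness I would fix $i\in\nAD(T)$ and re-examine the three SCT conditions for $s_iT$. A useful first observation is that a non-attacking descent forces the strict column inequality $c(i)<c(i+1)$: since $i\in\D(T)$ gives $c(i)\leq c(i+1)$, while equality of columns would place $i$ and $i+1$ in the same column and hence make $i\att i+1$, contradicting $i\notin\AD(T)$. Because the swap moves only the two entries $i$ and $i+1$, the row-decreasing and first-column conditions need only be inspected in the immediate neighbourhood of these cells. The genuinely delicate condition is the triple rule, and here I would use the non-attacking hypothesis together with the strict column gap to argue that the swap neither creates nor destroys a forbidden configuration.

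Granting well-definedness, the relation $\pi_i^2=\pi_i$ follows quickly. If $i\notin\D(T)$ or $i\in\AD(T)$ then $\pi_iT\in\{T,0\}$ and the claim is immediate. If $i\in\nAD(T)$, then in $s_iT$ the entries $i$ and $i+1$ have exchanged cells, so $c_{s_iT}(i)=c(i+1)>c(i)=c_{s_iT}(i+1)$; hence $i\notin\D(s_iT)$ and $\pi_i(s_iT)=s_iT=\pi_iT$. The commutation relation for $|i-j|\geq 2$ is equally transparent: the pairs $\{i,i+1\}$ and $\{j,j+1\}$ are disjoint, so applying $\pi_j$ (whether it fixes, annihilates, or swaps $j$ and $j+1$) never moves the cells of $i$ or $i+1$. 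Consequently the membership of $i$ in $\D$, $\AD$, $\nAD$ is unaffected by $\pi_j$ and symmetrically, and a short distinction on the two independent outcomes yields $\pi_i\pi_jT=\pi_j\pi_iT$ in every case (both routes annihilate together or swap the same independent pairs).

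The braid relation $\pi_i\pi_{i+1}\pi_i=\pi_{i+1}\pi_i\pi_{i+1}$ is the main obstacle, and the only place where the interaction of three entries is essential. Here I would run a case analysis on the relative columns of the consecutive entries $i,i+1,i+2$ and on which pairs among them attack, tracking how the descent and attacking status of each relevant pair changes after each swap. In each configuration one applies both triple products to $T$ and checks agreement: typically both sides vanish, because an attacking descent is met along either route, or both sides produce the common tableau obtained by the corresponding sequence of non-attacking swaps. The hard part will be the bookkeeping in the swapping cases, and the SCT axioms are exactly what keep it consistent: the triple rule, together with the strict column gaps forced by non-attacking descents, rules out the incoherent configurations and guarantees that every intermediate tableau along each side is again an element of $\SCT(\ab)$. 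Completing this finite but somewhat lengthy verification establishes that $\sab$ is an $\He$-module.
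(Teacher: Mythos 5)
The paper does not actually prove this theorem: it is imported verbatim from \cite[Theorem 9.8]{Tewari2015}, so there is no in-paper argument to compare yours against. Your plan is nonetheless the standard one, and the same one carried out in that source: check that $s_iT\in\SCT(\ab)$ whenever $i\in\nAD(T)$, then verify the three defining relations of $\He$ on basis elements. The parts you actually execute are correct: the strict inequality $c_T(i)<c_T(i+1)$ for a non-attacking descent (same column would force $i\att_T (i+1)$), the resulting identity $\pi_i^2=\pi_i$ via $i\in\DC(s_iT)$, and the commutation relation for $|i-j|\geq 2$ are all fine as written.

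However, what you have is an outline rather than a proof, because the two load-bearing verifications are announced but not performed. First, closure under the triple rule: you say the swap ``neither creates nor destroys a forbidden configuration,'' but that is precisely the nontrivial claim --- one must examine every triple of cells involving the cell that receives $i$ or the cell that receives $i+1$, in its own column and in the adjacent columns, and check that the implication in the triple rule survives the change of entry; this needs both $i\natt_T (i+1)$ and the row/column axioms, and in \cite{Tewari2015} it occupies several preparatory lemmas. Second, the braid relation: the case analysis on the relative columns and attack relations among $i$, $i+1$, $i+2$ is where almost all of the work lies. It is not automatic that the two sides vanish together --- one must rule out, for instance, the scenario in which one side meets an attacking descent at an intermediate tableau while the other side completes all three swaps --- and nothing in your sketch indicates how the SCT axioms exclude these configurations. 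Until those two verifications are written out, the proposal is a correct strategy with the essential content still missing.
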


The module $\sa$ is called \emph{straight} if $\alpha= \ab$ is a composition.
Even though the main results of this paper only concern straight modules, we introduce the more general concept of skew modules here as they naturally arise in the context of the 0-Hecke action on chains of $\mc L_c$ in  \autoref{sec:0-hecke_operation_and_chains}.

\begin{exa}
	\label{exa:0-hecke_action}
	Consider the SCT 
	$T=
	\begin{ytableau} 
		1 \\ 
		6 & 5 & 4 & 3 \\ 
		8 & 7 &  2 \\ 
	\end{ytableau}.$ 
	Then $\D(T) = \set{1,2,6}$,	
	\[
		\pi_i T = 
		\begin{cases}
			T &\text{for $i=3,4,5,7$} \\
			0 &\text{for $i=6$} \\
			s_iT& \text{for $i=1,2$},
		\end{cases}
		\quad
		s_1T = 
		\begin{ytableau} 
			\bs2 \\ 
			6 & 5 & 4 & 3 \\ 
			8 & 7 & \bs1
		\end{ytableau}
		\quad \text{and} \quad
		s_2T =
		\begin{ytableau} 
			1 \\ 
			6 & 5 & 4 & \bs 2 \\ 
			8 & 7 & \bs3
		\end{ytableau}.
	\]
\end{exa}

The following relation gives rise to a decomposition of $\sab$.
Let $\ab$ be a skew composition of size $n$ and $T_1,T_2 \in \SCT(\ab)$. An equivalence relation $\sim$ on $\SCT(\ab)$ is given by
\begin{align*}
	T_1 \sim T_2 \iff      
	\text{in each column the relative orders of entries in $T_1$ and $T_2$ coincide.}
\end{align*}
 For example, the tableaux shown in \autoref{fig:example_for_E} form an equivalence class under $\sim$.
 We denote the \emph{set of equivalence classes} under $\sim$ on $\SCT(\ab)$ by $\mc E(\ab)$. 
 
For $E\in \mc E(\ab)$ define $\sse = \spa_\C E$. 
Observe that the definition of the 0-Hecke action on SCTx in \autoref{thm:sct_modules} implies that $\sse$ is a $\He$-submodule of $\sab$. Thus we have the following.
\begin{prop}[{\cite[Lemma 6.6]{Tewari2015}}]
	\label{thm:decomposition_in_equivalence_classes}
	Let $\ab$ be a skew composition. Then we have
	$\sab = \bigoplus_{E\in \mc E(\ab)} \sse$ as $\He$-modules.
\end{prop}

The main result of this paper is that the $\He$-endomorphism ring of each straight module $\se$ is $\C \id$ and, therefore, we obtain a decomposition of $\sa$ into indecomposable submodules from  \autoref{thm:decomposition_in_equivalence_classes}. We continue by studying the $\sse$ and their equivalence classes more deeply. 

Let $\ab$ be a skew composition of size $n$, $E\in \mc E(\ab)$ and $T_1,T_2\in E$. In \cite[Section 4]{Tewari2015} it is shown that a partial order $\pleq$ on $E$ is given by
\begin{align*}
	T_1 \pleq T_2 \iff \exists \sigma \in \fS_n \text{ such that } \pi_\sigma T_1 = T_2.
\end{align*}

 We refer to the poset $(E,\pleq)$ simply by $E$. An example is shown in \autoref{fig:example_for_E}. The following theorem summarizes results of \cite[Section 6]{Tewari2015}.
 
\begin{figure}
	\scalebox{1}{%	
		\begin{tikzpicture} 
		\newcommand*{\xdist}{*3}
		\newcommand*{\ydist}{*2.2}
		
		\node (n0) at (0.00\xdist,0\ydist) 
		{
			$T_{0,E} =
			\begin{ytableau} 
			1 \\ 
			6 & 5 & 4 & 3 \\ 
			8 & 7 & 2 \\ 
			\end{ytableau}  
			$ 
		}; 
		\node (n1) at (-0.50\xdist,1\ydist) 
		{
			$\begin{ytableau} 
			2 \\ 
			6 & 5 & 4 & 3 \\ 
			8 & 7 & 1 \\ 
			\end{ytableau}  
			$ 
		}; 
		\node (n2) at (0.50\xdist,1\ydist) 
		{
			$\begin{ytableau} 
			1 \\ 
			6 & 5 & 4 & 2 \\ 
			8 & 7 & 3 \\ 
			\end{ytableau}  
			$ 
		}; 
		\node (n3) at (-0.50\xdist,2\ydist) 
		{
			$\begin{ytableau} 
			3 \\ 
			6 & 5 & 4 & 2 \\ 
			8 & 7 & 1 \\ 
			\end{ytableau}  
			$ 
		}; 
		\node (n4) at (0.50\xdist,2\ydist) 
		{
			$\begin{ytableau} 
			2 \\ 
			6 & 5 & 4 & 1 \\ 
			8 & 7 & 3 \\ 
			\end{ytableau}  
			$ 
		}; 
		\node (n5) at (-0.50\xdist,3\ydist) 
		{
			$\begin{ytableau} 
			4 \\ 
			6 & 5 & 3 & 2 \\ 
			8 & 7 & 1 \\ 
			\end{ytableau}  
			$ 
		}; 
		\node (n6) at (0.50\xdist,3\ydist) 
		{
			$\begin{ytableau} 
			3 \\ 
			6 & 5 & 4 & 1 \\ 
			8 & 7 & 2 \\ 
			\end{ytableau}  
			$ 
		}; 
		\node (n7) at (0.00\xdist,4\ydist) 
		{
			$T_{1,E} =
			\begin{ytableau} 
			4 \\ 
			6 & 5 & 3 & 1 \\ 
			8 & 7 & 2 \\ 
			\end{ytableau}  
			$ 
		}; 
		
		\draw [thick, ->] (n0) -- (n1) node [near start, left] {$\pi_{1}$}; 
		\draw [thick, ->] (n0) -- (n2) node [near start, left] {$\pi_{2}$}; 
		\draw [thick, ->] (n1) -- (n3) node [near start, left] {$\pi_{2}$}; 
		\draw [thick, ->] (n2) -- (n4) node [near start, left] {$\pi_{1}$}; 
		\draw [thick, ->] (n3) -- (n5) node [near start, left] {$\pi_{3}$}; 
		\draw [thick, ->] (n3) -- (n6) node [near start, left] {$\pi_{1}$}; 
		\draw [thick, ->] (n4) -- (n6) node [near start, left] {$\pi_{2}$}; 
		\draw [thick, ->] (n5) -- (n7) node [near start, left] {$\pi_{1}$}; 
		\draw [thick, ->] (n6) -- (n7) node [near start, left] {$\pi_{3}$}; 			
		\end{tikzpicture}				
	}
	\caption{A poset $(E,\pleq)$. Each covering relation is labeled with the 0-Hecke generator $\pi_i$ realizing it. 
	}
	\label{fig:example_for_E}
\end{figure}
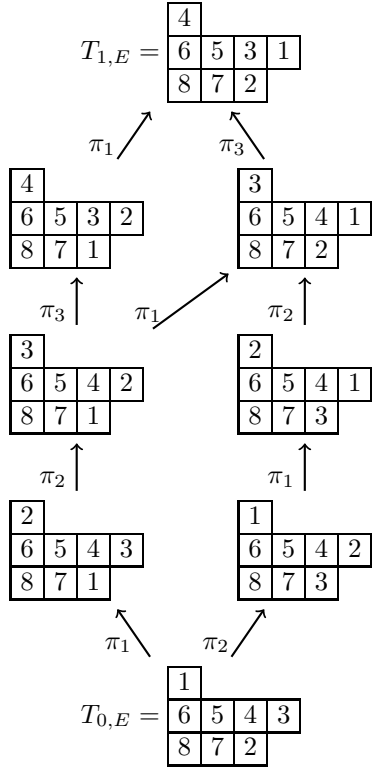

\begin{samepage}
\begin{thm}
	\label{thm:source_and_sink_tableau}
	 Let $\ab$ be a skew composition, $E\in \mc E(\ab)$ and $T\in E$.
	\begin{compactenum}
		\item $T$ is minimal according to $\pleq$ if and only if $\DC(T)= \ND(T)$. There is a unique tableau $T_{0,E}\in E$ which satisfies these conditions called \emph{source tableau} of $E$.
		\item $T$ is maximal according to $\pleq$ if and only if $\D(T)= \AD(T)$. There is a unique tableau $T_{1,E}\in E$ which satisfies these conditions called \emph{sink tableau} of $E$.
	\end{compactenum}
	In particular, $\sse$ is a cyclic module generated by $T_{0,E}$.
\end{thm}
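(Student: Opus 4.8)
The plan is to read the covering relations of the poset $(E,\pleq)$ directly off the action in \autoref{thm:sct_modules}, characterise the extremal tableaux, and only then address existence, uniqueness and cyclicity. Since a generator either fixes a tableau, annihilates it, or performs the swap $\pi_i T = s_i T$ exactly when $i\in\nAD(T)$, the covering relations pointing \emph{upward} from $T$ are precisely $T\lessdot s_i T$ with $i\in\nAD(T)$; moreover each such $s_i T$ genuinely lies in $E$, because a non-attacking descent forces $i$ and $i+1$ into different columns, so the swap preserves the relative column orders. Consequently $T$ is maximal if and only if it admits no up-move, i.e.\ $\nAD(T)=\emptyset$, which by definition is $\D(T)=\AD(T)$. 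This settles the sink characterisation with no further input.

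For the source I would run the same analysis backwards: $T$ is minimal iff there is no $U\in E$ and index $i$ with $\pi_i U = T$ and $i\in\nAD(U)$, that is, iff for every $i$ the candidate $U=s_iT$ fails either to be a valid SCT in $E$ or to have $i\in\nAD(U)$. Translating the conditions through the swap, $i\in\D(U)$ together with $U\in E$ forces $c(i+1)<c(i)$ in $T$, i.e.\ $i\in\DC(T)$, and one checks that $i\natt_U i+1$ is equivalent to $T^{-1}(i+1)\natt T^{-1}(i)$ in $T$. The key lemma is that the one attacking pattern that could obstruct this, namely $i+1$ lying in the column immediately to the left of $i$ and strictly above it, cannot occur in any SCT: the triple rule applied to the cells of $i$ and $i+1$ forces the cell immediately to the right of $i+1$ to contain an entry larger than $i$, contradicting the row-decrease condition (which bounds that entry above by $i+1$, hence by $i$). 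With this pattern excluded, a non-neighborly ascent always yields a down-move $s_iT\to T$, whereas a \emph{neighborly} ascent makes $s_iT$ violate row-decrease and so contributes nothing. Hence $T$ is minimal iff every ascent is neighborly, i.e.\ iff $\DC(T)=\ND(T)$.

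Existence of a minimal and a maximal tableau is immediate since $E$ is finite; the real work is uniqueness, and this is the step I expect to be the main obstacle, because a finite poset may a priori have several minimal elements and the local characterisation above does not by itself isolate one. I would prove it by constructing the source tableau explicitly from the only intrinsic data of $E$, the relative order of the entries within each column: one fills in the values $1,2,\dots,n$ by the unique rule dictated by ``every ascent neighborly'' and then shows, by induction on the entries, that any $T\in E$ with $\DC(T)=\ND(T)$ must coincide with it. The sink is handled dually from $\D(T)=\AD(T)$. An alternative route is to prove that the down-moves are confluent via a diamond argument together with termination, so that iterating them from an arbitrary $T\in E$ reaches one and the same minimal tableau.

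Once uniqueness is in hand the cyclicity is formal: a finite poset with a unique minimal element has that element as its global minimum, so $T_{0,E}\pleq T$ for every $T\in E$, whence $T=\pi_\sigma\, T_{0,E}$ for a suitable $\sigma\in\fS_n$. Since $E$ is a $\C$-basis of $\sse$, this exhibits $\sse$ as the cyclic $\He$-module generated by $T_{0,E}$, completing the argument.
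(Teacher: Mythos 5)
First, a point of comparison: the paper does not prove this statement at all --- it is imported from Tewari and van Willigenburg (Section 6 of the cited paper) --- so there is no in-paper argument to measure you against, and your proposal has to stand on its own. Judged that way, your characterisation of the extremal elements is essentially sound. The sink direction is immediate, as you say. For the source direction, your key lemma --- that no SCT can have $i+1$ in the column immediately to the left of $i$ and strictly above it, because the triple rule forces the cell to the right of $i+1$ to carry an entry greater than $i$ while row-decrease bounds it by $i$ --- is correct and is exactly the right observation (one should also note that this cell cannot lie in $\beta$, since $\beta$ is left-justified and the cell of $i+1$ is not in $\beta$). What you pass over silently is that for a non-neighborly ascent $i$ the swapped filling $s_iT$ is again a standard composition tableau: the row, first-column and equivalence-class conditions are easy, but preservation of the triple rule under the swap needs a short case check using $c_T(i+1)<c_T(i)$. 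That check does go through, so this is an omission rather than an error.

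The genuine gap is uniqueness, which you correctly identify as the crux but then do not prove. ``One fills in the values $1,\dots,n$ by the unique rule dictated by every ascent neighborly'' is not an argument until the rule is exhibited and shown to be forced; the entire content of the claim is that the data of $E$ (the shape together with the relative order of entries in each column) admits exactly one filling whose entries decompose into horizontal strips read right to left, and nothing in your sketch excludes two such fillings. Your fallback via confluence of down-moves has a hidden circularity: confluence plus termination yields a unique minimal element in each connected component of the move-graph, but $E$ is defined as an equivalence class under $\sim$ (equal relative column orders), not as a connected component, and the connectivity of $E$ under the $\pi_i$-moves is itself a nontrivial fact essentially equivalent to what you are trying to establish. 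Without either the explicit forced construction or an independent proof of connectivity, the uniqueness of $T_{0,E}$ and $T_{1,E}$ --- and with it the cyclicity of $\sse$, whose derivation from uniqueness is otherwise fine --- remains unproved.
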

\end{samepage}

A source and a sink tableau can be observed in \autoref{fig:example_for_E}. Next, we establish a connection between $E$ and an interval of the left Bruhat order. To do this we introduce the notion of \emph{column words}.
 Given $T\in \SCT(\ab)$ and $j\geq 1$, let $w_j$ be the word obtained by reading the entries in the $j$th column of $T$ from top to bottom. Then $\col_T= w_1w_2 \cdots$ is the \emph{column word} of $T$. Clearly, $\col_T$ can be regarded as an element of $\fS_n$ (in one-line notation).
 
\begin{exa}
	The tableau $T_{0,E}$ from \autoref{fig:example_for_E} has $\col_{T_{0,E}} = 16857423\in \fS_8$. 
\end{exa}

\begin{lem}[{\cite[Lemma 4.4]{Tewari2015}}]
	\label{thm:tableaux_cover_impies_column_words_cover}
	Let $T_1$ be an SCT, $i\in \nAD(T_1)$ and $T_2 = \pi_i T_1$. Then $\col_{T_2} = \fs_i \col_{T_1}$ and $l(\col_{T_2}) = l(\col_{T_1}) + 1$. That is, $\col_{T_2}$ covers $\col_{T_1}$ in left Bruhat order.
\end{lem}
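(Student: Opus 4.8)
The plan is to transport the whole statement to the one-line notation of permutations and to read off everything from how the operation $T_1 \mapsto \fs_i T_1$ affects the column word.

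First I would note that, since $i\in \nAD(T_1)$, the action of \autoref{thm:sct_modules} gives $T_2 = \pi_i T_1 = \fs_i T_1$, so that $T_2$ arises from $T_1$ by interchanging the two entries $i$ and $i+1$ while fixing all other cells. In particular the underlying set of cells, and hence the column-reading order, is identical for $T_1$ and $T_2$; only the contents of the two cells $T_1^{-1}(i)$ and $T_1^{-1}(i+1)$ are exchanged. Reading $T_2$ column by column therefore produces exactly the sequence $\col_{T_1}$ with the two values $i$ and $i+1$ interchanged. Because the filling is bijective, each of $i$ and $i+1$ occurs precisely once, so this value-swap is nothing but left multiplication by $\fs_i$; that is, $\col_{T_2} = \fs_i \col_{T_1}$.

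For the length statement I would invoke the standard inversion criterion: for $\sigma\in \fS_n$ one has $l(\fs_i \sigma) = l(\sigma)+1$ if and only if the value $i$ precedes the value $i+1$ in the one-line notation of $\sigma$, since left multiplication by $\fs_i$ can only create or destroy the single inversion formed by the slots occupied by $i$ and $i+1$. It thus remains to check that $i$ is read before $i+1$ in $\col_{T_1}$. Here the hypothesis enters: $i\in \nAD(T_1)\subseteq \D(T_1)$ gives $c(i)\leq c(i+1)$, and equality is impossible, since two distinct entries sharing a column attack one another, which would place $i$ in $\AD(T_1)$. Hence $c(i) < c(i+1)$ strictly, and as the column word lists entries by increasing column (breaking ties by increasing row), $i$ occurs before $i+1$. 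Therefore $l(\col_{T_2}) = l(\col_{T_1})+1$.

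Combining the two parts, $\col_{T_2} = \fs_i \col_{T_1}$ together with $l(\col_{T_2}) = l(\col_{T_1})+1$ is exactly the assertion that $\col_{T_2}$ covers $\col_{T_1}$ in left weak Bruhat order. I expect no serious obstacle; the only point demanding care is the clean separation of three distinct operations---exchanging the contents of two fixed cells, swapping two values globally, and swapping two positions---and the verification that, because the cells stay put and the swapped values are the consecutive integers $i$ and $i+1$, the effect on $\col_{T_1}$ is precisely the global value-swap realized by left multiplication by $\fs_i$. The sole genuinely combinatorial input is the strict inequality $c(i) < c(i+1)$, which is where the non-attacking hypothesis is used.
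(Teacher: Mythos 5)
Your proof is correct: the identification of the entry-swap $T_2=\fs_i T_1$ with left multiplication of $\col_{T_1}$ by $\fs_i$, and the use of the non-attacking hypothesis to get the strict inequality $c(i)<c(i+1)$ (same column would force $i\att i+1$ and hence $i\in\AD(T_1)$), which places $i$ before $i+1$ in the column word and yields the length increase, is exactly the standard argument. The paper itself does not prove this lemma but cites it from Tewari and van Willigenburg, so there is no in-paper proof to compare against; your reasoning matches what that reference does and fills the gap correctly.
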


The following statement is similar to \cite[Lemma 4.3]{Tewari2015}.
\begin{lem}
	\label{thm:column_word_and_0-hecke_operation}
	Let $T_1$ and $T_2$ be two SCTx such that   $\pi_{i_p} \cdots \pi_{i_1} T_1 =  T_2$. Then there is a subsequence $j_q, \dots, j_1$ of $i_p, \dots,i_1$ such that
	\begin{compactenum}
		\item
		$T_2 = \pi_{j_q} \cdots \pi_{j_1}T_1$,
		\item 
		$\fs_{j_q} \cdots \fs_{j_1}$ is a reduced word for $\col_{T_2}\col^{-1}_{T_1}$.
	\end{compactenum}
	In particular, $T_2 = \pi_{\col_{T_2}\col^{-1}_{T_1}} T_1$.
\end{lem}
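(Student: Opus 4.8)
The plan is to track the sequence of intermediate tableaux produced by the operators $\pi_{i_k}$, discard every operator that acts trivially, and keep only the genuine swaps; the column words of the retained tableaux then form a saturated chain in left Bruhat order, which forces the extracted word to be reduced.

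First I would set $U_0 := T_1$ and $U_k := \pi_{i_k} U_{k-1}$ for $k = 1, \dots, p$, so that $U_p = T_2$. Since $T_2$ is an SCT it is nonzero, and since $\pi_i 0 = 0$ a single vanishing $U_k$ would propagate to $U_p = 0$, contradicting $U_p = T_2 \neq 0$; hence no intermediate $U_k$ vanishes. Inspecting the three cases of the action in \autoref{thm:sct_modules}, at each step exactly one of the following holds: $i_k \notin \D(U_{k-1})$ and $U_k = U_{k-1}$; $i_k \in \AD(U_{k-1})$ and $U_k = 0$; or $i_k \in \nAD(U_{k-1})$ and $U_k = s_{i_k} U_{k-1} \neq U_{k-1}$. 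The middle case is excluded by nonvanishing, so every step is either an identity step or a non-attacking swap.

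Next I would let $k_1 < \dots < k_q$ enumerate the positions at which a genuine swap occurs and set $j_m := i_{k_m}$. Between consecutive swap positions all operators act as the identity, so $U_{k_m} = \pi_{j_m} U_{k_{m-1}}$, and after the last swap $U_p = U_{k_q}$; an immediate induction then gives $T_2 = \pi_{j_q} \cdots \pi_{j_1} T_1$, which is part (1). Writing $V_m := U_{k_m}$ (with $V_0 = T_1$ and $V_q = T_2$), each retained step has $j_m \in \nAD(V_{m-1})$ and $V_m = \pi_{j_m} V_{m-1}$, so \autoref{thm:tableaux_cover_impies_column_words_cover} applies throughout: $\col_{V_m} = \fs_{j_m} \col_{V_{m-1}}$ with $l(\col_{V_m}) = l(\col_{V_{m-1}}) + 1$. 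Iterating yields $\col_{T_2} = \fs_{j_q} \cdots \fs_{j_1} \col_{T_1}$ together with a saturated chain $\col_{T_1} \lessdot_L \col_{V_1} \lessdot_L \dots \lessdot_L \col_{T_2}$, whence $\col_{T_1} \leq_L \col_{T_2}$ and $l(\col_{T_2}) = l(\col_{T_1}) + q$.

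Finally, rearranging gives $\fs_{j_q} \cdots \fs_{j_1} = \col_{T_2} \col_{T_1}^{-1}$, an expression in $q$ generators. By \autoref{thm:properties_bruhat_order}(1), the relation $\col_{T_1} \leq_L \col_{T_2}$ yields $l(\col_{T_2} \col_{T_1}^{-1}) = l(\col_{T_2}) - l(\col_{T_1}) = q$, so this expression attains the Coxeter length of the permutation and is therefore reduced; this is part (2). The concluding identity $T_2 = \pi_{\col_{T_2} \col_{T_1}^{-1}} T_1$ is then immediate from part (1) together with the well-definedness of $\pi_\sigma$ via any reduced word. The only point requiring care is the bookkeeping that deleting the identity steps neither alters the final result nor disturbs the tableaux on which the later operators act; everything substantive is delivered by \autoref{thm:tableaux_cover_impies_column_words_cover}, so I do not anticipate a genuine obstacle.
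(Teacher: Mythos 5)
Your proof is correct and follows essentially the same route as the paper: extract the subsequence of operators that act by a genuine (non-attacking-descent) swap, apply \autoref{thm:tableaux_cover_impies_column_words_cover} at each retained step to build a saturated chain in left Bruhat order, and conclude reducedness from the length count. Your write-up is somewhat more explicit than the paper's (which takes a minimal subsequence and says ``use the case $q=1$ iteratively''), but the substance is identical.
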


\begin{proof} From the definition of the 0-Hecke operation follows that we can find a subsequence $j_q, \dots, j_1$ of $i_p, \dots,i_1$ of minimal length such that $T_2 = \pi_{j_q} \cdots \pi_{j_1}T_1$. If $q=0$ then $T_2 = T_1$ and the result is trivial. If $q=1$ set $i :=j_1$. Then by the minimality of $q$, $T_2 \neq T_1$ and thus $i\in \nAD(T_1)$. Now \autoref{thm:tableaux_cover_impies_column_words_cover} shows that $\fs_i$ is a reduced word for $\col_{T_2}\col^{-1}_{T_1}$. If $q>1$ use the case $q=1$ iteratively.
\end{proof}

\begin{thm}[{\cite[Theorem 6.18]{Tewari2015}}]
	\label{thm:E_isomorphic_to_bruhat_interval}
	Let $\ab$ be a skew composition, $E\in \mc E(\ab)$ and $I = [\col_{T_{0,E}}, \col_{T_{1,E}}]$ an interval in left Bruhat order. Then the map 
	$\col \colon E\to I$, $T\mapsto\col_T$ is a poset isomorphism. In particular, $E$ is a graded lattice with rank function $\delta\colon T\mapsto l(\col_T \col^{-1}_{T_{0,E}})$.
\end{thm}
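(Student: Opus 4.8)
The plan is to show that $\col\colon E \to I$ is a bijection whose inverse is again order-preserving, and then to transport the graded-lattice structure of $I$ (available from \autoref{thm:bruhat_interval}) back to $E$. First I would record that $\col$ is \emph{injective}: every tableau in $E$ has the fixed shape $\ab$, and for a fixed shape a standard composition tableau is recovered from its column reading word, so $T\mapsto \col_T$ cannot identify two distinct tableaux. Next I would check that $\col$ \emph{lands in} $I$. By \autoref{thm:source_and_sink_tableau} the poset $E$ has a unique minimal element $T_{0,E}$ and a unique maximal element $T_{1,E}$; since $E$ is finite this forces $T_{0,E}\pleq T\pleq T_{1,E}$ for every $T\in E$. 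Hence it suffices to know that $\col$ is order-preserving, for then $\col_{T_{0,E}}\leq_L \col_T\leq_L \col_{T_{1,E}}$, i.e. $\col_T\in I$.

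For order-preservation, suppose $T_1\pleq T_2$, say $\pi_\sigma T_1 = T_2$. By \autoref{thm:column_word_and_0-hecke_operation} there is a subsequence giving a reduced word $\fs_{j_q}\cdots\fs_{j_1}$ for $\col_{T_2}\col_{T_1}^{-1}$ with $T_2 = \pi_{j_q}\cdots\pi_{j_1}T_1$. Setting $T^{(r)} = \pi_{j_r}\cdots\pi_{j_1}T_1$, minimality of the subsequence gives $T^{(r)}\neq T^{(r-1)}$, so $j_r\in\nAD(T^{(r-1)})$; iterating \autoref{thm:tableaux_cover_impies_column_words_cover} then yields a saturated chain
\[
\col_{T_1}\lessdot_L \fs_{j_1}\col_{T_1}\lessdot_L\cdots\lessdot_L \col_{T_2}
\]
in the left Bruhat order, so $\col_{T_1}\leq_L\col_{T_2}$. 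In particular $\col$ sends covers to covers.

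The heart of the argument is a \emph{lifting lemma}: if $T\in E$ and $\col_T\lessdot_L \fs_i\col_T\leq_L \col_{T_{1,E}}$, then $i\in\nAD(T)$, so that $T\lessdot \pi_i T$ in $E$ with $\col_{\pi_i T}=\fs_i\col_T$. The length increase $l(\fs_i\col_T)=l(\col_T)+1$ forces $i$ to precede $i+1$ in $\col_T$ and hence $i\in\D(T)$, so the only possibility to exclude is $i\in\AD(T)$; this is exactly the step I expect to be the main obstacle, since there $\pi_i T = 0$ and the naive swap $s_iT$ leaves $E$. I would rule it out by showing that an attacking descent cannot be resolved below the sink: $\fs_i\col_T\leq_L\col_{T_{1,E}}$ requires $(i,i+1)\in\operatorname{Inv}(\col_{T_{1,E}}^{-1})$, i.e. value $i$ to follow value $i+1$ in $\col_{T_{1,E}}$, whereas using that all tableaux of $E$ share the relative order of entries within each column together with the sink property $\D(T_{1,E})=\AD(T_{1,E})$ one shows that $i\in\AD(T)$ forces value $i$ to \emph{precede} value $i+1$ in $\col_{T_{1,E}}$ — a contradiction. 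Establishing this precedence statement in full generality (both the equal-column and the adjacent-column cases of the attack) is the genuinely combinatorial part I would have to grind out.

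Once the lifting lemma is available, surjectivity and order-reflection follow formally. For $\rho\in I$, choose a saturated chain from $\col_{T_{0,E}}$ to $\rho$ inside $I$ (it exists and has length $l(\rho\,\col_{T_{0,E}}^{-1})$ by \autoref{thm:bruhat_interval} and \autoref{thm:properties_bruhat_order}) and lift it step by step starting at $T_{0,E}$; every vertex stays $\leq_L\rho\leq_L\col_{T_{1,E}}$, so the lemma applies and produces $T\in E$ with $\col_T=\rho$. Similarly, if $\col_{T_1}\leq_L\col_{T_2}$, a saturated chain from $\col_{T_1}$ to $\col_{T_2}$ (which lies in $I$ since $\col_{T_2}\leq_L\col_{T_{1,E}}$) lifts from $T_1$ to a tableau $T'$ with $\col_{T'}=\col_{T_2}$, whence $T'=T_2$ by injectivity and $T_1\pleq T_2$. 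Thus $\col$ is a poset isomorphism, and the graded-lattice structure of $I$ transfers to $E$ with rank function $\delta(T)=l(\col_T\col_{T_{0,E}}^{-1})$.
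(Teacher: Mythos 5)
Your skeleton --- injectivity of $\col$, the containment $\col(E)\subseteq I$ via the uniqueness of source and sink, order-preservation via \autoref{thm:column_word_and_0-hecke_operation} and \autoref{thm:tableaux_cover_impies_column_words_cover}, and then a lifting lemma yielding surjectivity and order-reflection --- is the right architecture, and it is essentially what \cite{Tewari2015} does (the paper itself does not reprove this theorem; it only remarks that the three lemmas above suffice to carry out the proof of \cite[Theorem 6.18]{Tewari2015}, i.e.\ that $\col$ and its inverse send maximal chains to maximal chains). The first three steps are handled correctly. The problem is that everything of substance is concentrated in the lifting lemma, specifically in excluding $i\in\AD(T)$, and you do not prove it: you explicitly defer ``the genuinely combinatorial part''. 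Without that step you obtain neither surjectivity nor order-reflection, so as written this is a genuine gap at the crux rather than a complete proof.

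Moreover, the route you sketch for closing the gap rests on a false premise about the weak order. The \emph{left} weak order is containment of inversion sets of the permutations themselves, $u\leq_L v$ if and only if $\set{(p,q)\mid p<q,\ u(p)>u(q)}\subseteq\set{(p,q)\mid p<q,\ v(p)>v(q)}$; it is the \emph{right} weak order that corresponds to inversion sets of the inverses. Hence $\fs_i\col_T\leq_L\col_{T_{1,E}}$ does \emph{not} force the value $i$ to follow the value $i+1$ in $\col_{T_{1,E}}$: already $\id\lessdot_L\fs_2\leq_L 231$ in $\fS_3$, yet the value $2$ precedes the value $3$ in $231$. What $\fs_i\col_T\leq_L\col_{T_{1,E}}$ actually forces is $T_{1,E}\big(T^{-1}(i)\big)>T_{1,E}\big(T^{-1}(i+1)\big)$, a condition on the entries of $T_{1,E}$ in the two \emph{cells} occupied by $i$ and $i+1$ in $T$. (That condition is in fact more tractable: when $i\att_T i+1$ with $c_T(i)=c_T(i+1)$ it contradicts $T\sim T_{1,E}$ at once; the adjacent-column case of the attack is where the real work remains.) Finally, your fallback assertion that $i\in\AD(T)$ alone forces the value $i$ to precede the value $i+1$ in $\col_{T_{1,E}}$ is false as stated: in \autoref{fig:example_for_E} one has $2\in\AD(T_{1,E})$ while $3$ precedes $2$ in $\col_{T_{1,E}}=46857321$. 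So the key step needs both a corrected formulation and an actual proof.
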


Actually, \autoref{thm:source_and_sink_tableau}, \autoref{thm:tableaux_cover_impies_column_words_cover} and \autoref{thm:column_word_and_0-hecke_operation} are everything needed to prove \autoref{thm:bruhat_interval} as in \cite{Tewari2015}. They imply that $\col$ (and its inverse) map maximal chains to maximal chains.
Note that from \autoref{thm:E_isomorphic_to_bruhat_interval} and \autoref{thm:properties_bruhat_order} follows that for $T_1 \pleq T_2$ saturated chains from $T_1$ to $T_2$ correspond to reduced words for $\col_{T_2}\col^{-1}_{T_1}$.

\begin{cor}
	\label{thm:rank_in_E_and_column_word}
	Let $T_1$ and $T_2$ be two SCTx of size $n$ and $\sigma \in \fS_n$ such that $T_2 = \pi_\sigma T_1$. Then $T_1$ and $T_2$ belong to the same equivalence class under $\sim$. Let $\delta$ be the rank function of that class. Then
	\begin{compactenum}
		\item $\delta(T_2) - \delta(T_1) = l(\col_{T_2}\col^{-1}_{T_1})$,
		\item $\delta(T_2) - \delta(T_1) \leq l(\sigma)$ where we have equality if and only if $\sigma = \col_{T_2}\col_{T_1}^{-1}$.
	\end{compactenum}
\end{cor}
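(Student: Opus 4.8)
The plan is to dispose of the preliminary claim first, then derive (i) from the grading of $E$, and finally deduce (ii) from (i) together with the subword property of \autoref{thm:column_word_and_0-hecke_operation}. For the claim that $T_1$ and $T_2$ lie in the same $\sim$-class, I would note that $\pi_\sigma$ is a product of generators $\pi_i$, each of which maps an SCT to itself, to $0$, or to $s_iT$ with $i\in\nAD(T)$. Since $T_2=\pi_\sigma T_1$ is by hypothesis a nonzero basis tableau, every intermediate result is a nonzero SCT, and each swap occurs at a non-attacking descent, so $i$ and $i+1$ lie in different columns; hence the relative order of entries within every column is unchanged at each step. Thus $T_1\sim T_2$ (equivalently, this is the submodule property of $\sse$ recorded before \autoref{thm:decomposition_in_equivalence_classes}), and $T_2=\pi_\sigma T_1$ witnesses $T_1\pleq T_2$.

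To prove (i), I would use that $E$ is graded with rank function $\delta$ by \autoref{thm:E_isomorphic_to_bruhat_interval}, so that $\delta(T_2)-\delta(T_1)$ equals the common length of all saturated chains from $T_1$ to $T_2$ in $E$. By the remark following \autoref{thm:E_isomorphic_to_bruhat_interval}, such chains are in bijection with reduced words for $\col_{T_2}\col^{-1}_{T_1}$, and every reduced word for a given permutation has length $l(\col_{T_2}\col^{-1}_{T_1})$. Combining these gives $\delta(T_2)-\delta(T_1)=l(\col_{T_2}\col^{-1}_{T_1})$.

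For (ii), I would fix a reduced word $\fs_{i_p}\cdots\fs_{i_1}$ for $\sigma$ with $p=l(\sigma)$, so that $T_2=\pi_{i_p}\cdots\pi_{i_1}T_1$. Applying \autoref{thm:column_word_and_0-hecke_operation} yields a subsequence $j_q,\dots,j_1$ of $i_p,\dots,i_1$ with $\fs_{j_q}\cdots\fs_{j_1}$ a reduced word for $\col_{T_2}\col^{-1}_{T_1}$; hence $q=l(\col_{T_2}\col^{-1}_{T_1})=\delta(T_2)-\delta(T_1)$ by (i). Being a subsequence, $q\le p=l(\sigma)$, which is the asserted inequality. Equality $q=p$ holds exactly when the subsequence exhausts the whole word, i.e. $\sigma=\fs_{i_p}\cdots\fs_{i_1}=\col_{T_2}\col^{-1}_{T_1}$; conversely, if $\sigma=\col_{T_2}\col^{-1}_{T_1}$ then $l(\sigma)=l(\col_{T_2}\col^{-1}_{T_1})=\delta(T_2)-\delta(T_1)$ by (i), so equality holds.

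I do not expect a genuine obstacle here: the substantive work has already been done in \autoref{thm:column_word_and_0-hecke_operation} and \autoref{thm:E_isomorphic_to_bruhat_interval}. The only step requiring care is making sure the hypotheses for the grading and saturated-chain correspondence are in force, namely that $T_1\pleq T_2$, which is precisely the content secured by the preliminary claim; once that is granted, the remaining arguments are bookkeeping with lengths and subwords.
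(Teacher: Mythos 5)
Your proposal is correct and follows essentially the same route as the paper: the paper also dispatches $T_1\sim T_2$ from the action's definition, derives (1) from the grading of $E$ together with the correspondence between saturated chains and reduced words for $\col_{T_2}\col^{-1}_{T_1}$ (the remark after \autoref{thm:E_isomorphic_to_bruhat_interval}), and obtains (2) from (1) combined with the subword statement of \autoref{thm:column_word_and_0-hecke_operation}. You have merely spelled out the details that the paper's two-line proof leaves implicit.
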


\begin{proof} Since $T_2 = \pi_\sigma T_1$, $T_2 \sim T_1$.
Part (1) follows from the discussion above and (2) is a consequence of (1) and \autoref{thm:column_word_and_0-hecke_operation}.
\end{proof}

We finish this section by preparing another consequence of \autoref{thm:tableaux_cover_impies_column_words_cover} for \autoref{sec:endomorphism_ring_of_0-Hecke_modules}.

\begin{prop}
	\label{thm:mutliple_flips_same_cell}
	
	Let $T$ be an SCT, $i,j\in T$ be such that $i<j$ and $\cell = T^{-1}(i)$.  If in $T$ $i$ is located left of $[i+1,j]$
	and does not attack $[i+1,j]$ then 
	\begin{compactenum}	
		\item  $T':=\pi_{j-1} \cdots \pi_{i+1}\pi_i T\in \SCT,$
		\item $\fs_{j-1} \cdots \fs_{i+1}\fs_i$ is a reduced word for $\col_{T'}\col_T^{-1}$,
		\item $T'(\cell)= j$.
	\end{compactenum}
\end{prop}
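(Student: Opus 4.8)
The plan is to prove the three assertions simultaneously by induction on the number of generators applied, viewing the operators as \emph{bubbling} the entry originally sitting in $\cell$ upward through the values $i+1, \dots, j$. Write $\cell_m := T^{-1}(m)$ for $m\in[i,j]$, so that $\cell = \cell_i$, set $T_0 := T$ and $T_{k+1} := \pi_{i+k}T_k$ for $k = 0, \dots, j-1-i$; there are $j-i$ steps and $T' = T_{j-i}$. The inductive claim I would carry is that each $T_k$ is a genuine SCT obtained from $T$ by the cyclic relabelling that places $i+k$ in $\cell_i$, places $i+m-1$ in $\cell_{i+m}$ for $1\le m\le k$, and leaves every other cell (in particular $\cell_{i+k+1}, \dots, \cell_j$) untouched. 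Granting this, part (3) is immediate, since $T'(\cell) = T_{j-i}(\cell_i) = j$.

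The heart of the induction is to show that $i+k \in \nAD(T_k)$, so that $\pi_{i+k}$ genuinely performs the swap $T_{k+1} = s_{i+k}T_k$ and, by \autoref{thm:sct_modules}, lands on a valid SCT. This is where the two hypotheses enter. Since membership in $\D$ and the attacking relation are properties of the cells of the two consecutive entries alone, and since in $T_k$ the entry $i+k$ occupies $\cell_i$ while $i+k+1$ still occupies its original cell $\cell_{i+k+1}$ (it has not yet been moved), the conditions $i+k\in \D(T_k)$ and $i+k \natt_{T_k} i+k+1$ reduce exactly to $c(\cell_i) \le c(\cell_{i+k+1})$ and $\cell_i \natt \cell_{i+k+1}$. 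Both hold by the hypotheses that $i$ is left of $[i+1,j]$ and does not attack $[i+1,j]$, applied to the element $i+k+1$ of that interval (note $i+k+1$ ranges over $[i+1,j]$ as $k$ runs over $0,\dots,j-1-i$). The swap then moves $i+k+1$ into $\cell_i$ and pushes $i+k$ into $\cell_{i+k+1}$, which is precisely the relabelling claimed for $T_{k+1}$, closing the induction and proving (1).

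For part (2), I would feed the chain of genuine swaps into \autoref{thm:tableaux_cover_impies_column_words_cover}. Each step $T_{k+1} = \pi_{i+k}T_k$ with $i+k \in \nAD(T_k)$ yields $\col_{T_{k+1}} = \fs_{i+k}\col_{T_k}$ and $l(\col_{T_{k+1}}) = l(\col_{T_k}) + 1$. Iterating gives $\col_{T'} = \fs_{j-1}\cdots \fs_i\, \col_T$ with the length rising by exactly $j-i$ over the $j-i$ steps, so $\col_{T'}\col_T^{-1} = \fs_{j-1}\cdots\fs_i$ is an expression of length $j-i$ in $j-i$ generators and is therefore reduced. I expect the only genuine subtlety to be the bookkeeping that keeps $i+k+1$ in its original cell until its turn, i.e. pinning down the precise form of the relabelling in the inductive claim; once that is in place, the reduction of the descent and non-attack tests to the standing hypotheses is automatic, and no further information about the global shape of $T_k$ is needed.
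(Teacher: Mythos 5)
Your proof is correct and follows essentially the same strategy as the paper's: an induction that applies one operator $\pi_{i+k}$ at a time, using the fact that the entry $i+k+1$ has not yet moved so that the descent and non-attack tests reduce to the standing hypotheses on $T$, and then invoking \autoref{thm:tableaux_cover_impies_column_words_cover} step by step to get the reduced word. The only difference is organizational (you run the induction forward with an explicit relabelling invariant, while the paper inducts on $j-i$ and peels off the last operator), which does not change the substance of the argument.
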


\begin{proof}
	Let $T$ be an SCT and $i,j\in T$ such that $i<j$, $i$ is located left of $[i+1,j]$
	and $i \natt[i+1,j]$.  Set $\cell = T^{-1}(i)$.
	We do an induction on $m:= j-i$. If $m=1$ then $i\in \nAD(T)$ and $T' = \pi_i T$. Thus, 	(1) and (3) hold and (2) follows from \autoref{thm:tableaux_cover_impies_column_words_cover}.
	
	Now, let $m>1$.  Since by assumption $i$ is located left of $[i+1,j]$
	and $i~\natt~[i+1,j]$, we can apply the induction hypothesis on $i$ and $j-1$ and obtain that $T'':=\pi_{j-2} \cdots \pi_{i+1}\pi_i T \in \SCT$, $\fs_{j-2} \cdots \fs_{i+1}\fs_i$ is a reduced word for $\col_{T''}\col_T^{-1}$ and $T''(\cell) = j-1$.
	Since the operators $\pi_{j-2}, \dots, \pi_{i+1}, \pi_{i}$ are unable to move $j$, we have $T''^{-1}(j) = T^{-1}(j).$
	By choice of $i$ and $j$,  $\cell \natt T^{-1}(j) = T''^{-1}(j)$ and $\cell$ is left of $T''^{-1}(j)$.
	Thus, $j-1\in \nAD(T'')$ so that $T' = \pi_{j-1}  \pi_{j-2}\cdots \pi_i T=\pi_{j-1}(T'')\in \SCT$ and $T'(\cell) = j$.
	From \autoref{thm:tableaux_cover_impies_column_words_cover} follows that $\col_{T'}\col_T^{-1}=s_{j-1}\col_{T''}\col^{-1}_{T}= \fs_{j-1}\fs_{j-2} \cdots \fs_i$ and that $\fs_{j-1}\fs_{j-2} \cdots \fs_i$ is a reduced word.
\end{proof}

\section{A 0-Hecke action on chains of the composition poset}

\label{sec:0-hecke_operation_and_chains}

In \autoref{thm:sct_and_chains} a bijection between saturated chains in the composition poset $\mc L_c$ and standard composition tableaux is given. 
In this section we study the 0-Hecke action on these chains induced by this bijection. The main result of this section, \autoref{thm:support_of_column_word_and_shape}, will be useful in \autoref{sec:endomorphism_ring_of_0-Hecke_modules}. We begin with some notation.

\begin{defi}
	\label{thm:parts_of_chains_in_L_c}
	Let $T$ be an SCT of shape $\ab$ and size $n$,  $m\in [0,n]$ and $\beta = \alpha^n  \lessdot_c \alpha^{n-1}  \lessdot_c \cdots  \lessdot_c \alpha^0 =\alpha$ the chain in $\mathcal L_c$ corresponding to $T$. 
	 The SCT of shape $\alpha^m \skc \beta$  corresponding to the chain $\alpha^n  \lessdot_c \alpha^{n-1}\lessdot_c \cdots  \lessdot_c \alpha^m$ is denoted by $T^{>m}$.
\end{defi}

\begin{exa}
	For 
	$T=\begin{ytableau}
	1 \\
	*(gray) & *(gray) & 3 \\
	*(gray) & 2
	\end{ytableau}$ 
	we have
	$ 
	T^{>2} =   \begin{ytableau} 
	*(gray) & *(gray) & 1  \\
	*(gray) \end{ytableau}.
	$	
\end{exa}

The following Lemma shows how we can obtain $T^{>m}$ directly from $T$.

\begin{lem}
	\label{thm:cells_and_entries_of_restricted_tableaux}
	Let $T$ be an SCT of size $n$ and shape $\ab$, $\beta = \alpha^n  \lessdot_c \alpha^{n-1}  \lessdot_c \cdots  \lessdot_c \alpha^0 = \alpha$ the chain in $\mathcal{L}_c$ corresponding to $T$ and $m\in [0,n]$.
	\begin{compactenum}
		\item	
		$\alpha^m =  \osh(T^{>m})$.
		\item  We obtain $T^{>m}$ from $T$ by removing the cells containing $1,\dots, m$ and subtracting $m$ from the remaining entries.
	\end{compactenum}
\end{lem}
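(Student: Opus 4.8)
The plan is to deduce both claims directly from the defining recursion \eqref{eq:tableau_chain} of \autoref{thm:sct_and_chains}, so that the whole argument reduces to carefully tracking a single index shift. Part (1) is immediate: by \autoref{thm:parts_of_chains_in_L_c} the tableau $T^{>m}$ has shape $\alpha^m \skc \beta$, and by definition the outer shape of this skew shape is $\alpha^m$, whence $\osh(T^{>m}) = \alpha^m$. Before addressing part (2) I would first record what $\alpha^m$ looks like cellwise: iterating $\alpha^{k-1} = \alpha^k \cup T^{-1}(k)$ from $k=n$ down to $k=m+1$ gives $\alpha^m = \beta \cup \set{T^{-1}(k) \mid m < k \leq n}$. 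In particular the cells of $\alpha^m$ not lying in $\beta$ are exactly those carrying the entries $m+1, \dots, n$ in $T$, and there are $n-m$ of them. This already shows that deleting from $T$ the cells with entries $1, \dots, m$ leaves a filling of the skew shape $\alpha^m \skc \beta$, which has size $n-m$.

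For the identification of the entries I would apply \autoref{thm:sct_and_chains} to the skew shape $\alpha^m \skc \beta$ of size $N := n-m$. The subchain $\beta = \alpha^n \lessdot_c \cdots \lessdot_c \alpha^m$ is a saturated chain from $\beta$ to $\alpha^m$, and under the bijection of \autoref{thm:sct_and_chains} it corresponds precisely to $T^{>m}$. Relabelling this subchain as $\beta = \gamma^N \lessdot_c \cdots \lessdot_c \gamma^0 = \alpha^m$ with $\gamma^j := \alpha^{m+j}$, the recursion \eqref{eq:tableau_chain} for $T^{>m}$ reads $\gamma^{k-1} = \gamma^k \cup (T^{>m})^{-1}(k)$ for $k = 1, \dots, N$. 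Thus $(T^{>m})^{-1}(k)$ is the unique cell added in passing from $\gamma^k = \alpha^{m+k}$ to $\gamma^{k-1} = \alpha^{m+k-1}$. On the other hand, the recursion for the full chain of $T$ says that this very cell is $T^{-1}(m+k)$. Comparing the two, $(T^{>m})^{-1}(k) = T^{-1}(m+k)$ for every $k \in [N]$, which is exactly the assertion that $T^{>m}$ arises from $T$ by removing the cells with entries $1, \dots, m$ and subtracting $m$ from each remaining entry.

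The argument is essentially a bookkeeping exercise, so I do not anticipate a genuine obstacle. The one point that needs care is the index shift $\gamma^j = \alpha^{m+j}$, combined with the observation that in a saturated chain of $\mathcal{L}_c$ each covering step adds exactly one well-defined cell; this is what guarantees that the ``added-cell'' data of the subchain agrees with the corresponding portion of the added-cell data of the full chain, and hence that the two applications of \eqref{eq:tableau_chain} can be compared term by term.
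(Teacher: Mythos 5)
Your proof is correct and follows essentially the same route as the paper's: both parts are read off from \autoref{thm:parts_of_chains_in_L_c} and the recursion \eqref{eq:tableau_chain}, with the added cells of the subchain matched against those of the full chain. Your version merely makes the index shift $(T^{>m})^{-1}(k) = T^{-1}(m+k)$ explicit, which the paper leaves implicit.
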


	\begin{proof}
		Part (1) follows directly from \autoref{thm:parts_of_chains_in_L_c}. 	
		By \autoref{thm:sct_and_chains}, we obtain $T^{>m}$ by successively adding cells with entries $n-m,\dots, 1$ to the inner shape $\beta$ at exactly the same positions where we would add $n,\dots, m+1$ to $\beta$ in order to obtain $T$ from its corresponding chain. This implies part (2).
	\end{proof}

With the first part of \autoref{thm:cells_and_entries_of_restricted_tableaux} we can access the compositions within a chain of a given SCT. We use the following preorder to describe how the 0-Hecke action affects these compositions.

\begin{defi}		
	\begin{compactenum}
		\item 	For  $\alpha = \parts{\alpha}{l}\vDash n$ and $j\in \N$ define $|\alpha|_j = \# \left\{ i \in [l] \mid \alpha_i\geq j \right\}$. 
		\item
		 On the set of compositions of size $n$ we define the preorder $\dleq$ by
		\begin{align*}
			\alpha \dleq \beta \iff \sum_{j=1}^k |\beta|_j \leq \sum_{j=1}^k |\alpha|_j \text{ for all $k \geq 1$.}
		\end{align*} 
		 Moreover, set $\alpha \dlneq \beta \iff  \alpha \dleq \beta$ and $\alpha \neq \beta$.
	\end{compactenum}
\end{defi}

Note that $|\alpha|_j$ is the number of cells in the $j$th column of the diagram of $\alpha$.
	Obviously $\dleq$ is reflexive and transitive. It is not antisymmetric since for example $(2,1) \dleq (1,2)$ and $(1,2) \dleq (2,1)$. In general, for $\alpha, \beta\vDash n$ we have
	\begin{align*}
	\alpha \dleq \beta \text{ and } 
	\beta \dleq \alpha \iff |\alpha|_j = |\beta|_j \text{ for all  $j=1,2,\dots$} \iff \widetilde \alpha = \widetilde \beta.
	\end{align*}	
	If we restrict $\dleq$ to partitions, we obtain the well known dominance order appearing, for example, in   \cite{Stanley1999}.

\begin{exa}
\[
\ydiagram{2,2} \dlneq \ydiagram{3,1}
\qquad  \qquad 
\ydiagram{1,2,2}
\dlneq
\begin{ytableau}
\hfil &&\\
&
\end{ytableau}		
\]	
\end{exa}

\begin{lem}
		\label{thm:dominance_and_hecke_covering}
	Let $\ab$ be a skew composition of size $n$ and $T_1,T_2\in \SCT(\ab)$ be such that $T_2 = \pi_i T_1$ for an $i\in \nAD(T_1)$.
	 Then
	\begin{alignat*}{32}
		\osh(T_2^{>i}) &\dlneq  \osh (T_1^{>i}), \\
		\osh(T_2^{>m}) &= \osh (T_1^{>m}) \text{ for }  m \in [0,n],  m\neq i.
	\end{alignat*}
\end{lem}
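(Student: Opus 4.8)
The plan is to read off each outer shape $\osh(T^{>m})$ as a set of cells and then track exactly how interchanging the entries $i$ and $i+1$ changes it. By iterating the recursion $\alpha^{k-1}=\alpha^k\cup T^{-1}(k)$ from \eqref{eq:tableau_chain} (equivalently, by \autoref{thm:cells_and_entries_of_restricted_tableaux}), the shape $\osh(T^{>m})=\alpha^m$ consists, as a set of cells, of the inner shape $\beta$ together with all cells of $T$ whose entry exceeds $m$. I would write $\cell=T_1^{-1}(i)$ and $\cell'=T_1^{-1}(i+1)$; since $T_2=s_iT_1$ only swaps the two entries $i$ and $i+1$, we have $T_2^{-1}(i)=\cell'$, $T_2^{-1}(i+1)=\cell$, and $T_1^{-1}(k)=T_2^{-1}(k)$ for every $k\notin\set{i,i+1}$.

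For the equalities (the case $m\neq i$) I would compare the two sets of cells carrying an entry larger than $m$. If $m\geq i+1$, neither $i$ nor $i+1$ is counted and all larger entries sit in the same cells of $T_1$ and $T_2$; if $m\leq i-1$, then both $\cell$ and $\cell'$ are counted in either tableau. In both cases the cell sets agree, so $\osh(T_2^{>m})=\osh(T_1^{>m})$.

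For $m=i$ only the entries exceeding $i$ matter, and the two cell sets differ in exactly one place: $\osh(T_1^{>i})$ contains $\cell'$ but not $\cell$, while $\osh(T_2^{>i})$ contains $\cell$ but not $\cell'$, the two shapes agreeing on $\beta$ and on the cells holding $i+2,\dots,n$. It then remains to locate the columns of $\cell$ and $\cell'$. Because $i\in\D(T_1)$ we have $c(\cell)\leq c(\cell')$, and because $i\in\nAD(T_1)$ we have $\cell\natt\cell'$; since two distinct cells lying in a common column always attack, this forces $c(\cell)\neq c(\cell')$, hence $c(\cell)<c(\cell')$. Writing $a=c(\cell)$ and $b=c(\cell')$, the column-count vectors of the two shapes coincide except that $\osh(T_2^{>i})$ has one extra cell in column $a$ and $\osh(T_1^{>i})$ one extra in column $b$. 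Thus $\sum_{j=1}^{k}|\osh(T_1^{>i})|_j\leq\sum_{j=1}^{k}|\osh(T_2^{>i})|_j$ for all $k$, with strict inequality for $a\leq k<b$, which is exactly $\osh(T_2^{>i})\dlneq\osh(T_1^{>i})$.

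Everything here is elementary bookkeeping; the only genuine point is getting the direction of $\dleq$ right and recognizing that the non-attacking hypothesis is precisely what upgrades the descent inequality $c(\cell)\leq c(\cell')$ to the strict inequality needed for $\dlneq$. I expect the step most worth stating carefully to be the deduction of $c(\cell)<c(\cell')$ from $\cell\natt\cell'$ via the same-column case of the attack relation.
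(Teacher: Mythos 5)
Your proof is correct and follows essentially the same route as the paper: the $m\neq i$ case is handled identically, and for $m=i$ both arguments reduce to the observation that the two outer shapes differ only by one cell moving from column $c(\cell')$ to column $c(\cell)$ with $c(\cell)<c(\cell')$ (the paper likewise derives this strict inequality from $i\in\nAD(T_1)$), after which the definition of $\dleq$ gives $\dlneq$. Your explicit column-count bookkeeping just spells out what the paper leaves to "the definition of $\dleq$".
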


\begin{proof}
	Assume $T_1,T_2$ and $i$ as in the assertion. Then $T_2$ is the tableau obtained from $T_1$ by swapping the entries $i$ and $i+1$ of $T_1$. Let $m\in[0,n]$.
	 
	If $m\neq i$ then either $\set{i,i+1} \subseteq [1,m]$ or $\set{i,i+1} \cap [1,m]= \emptyset$.
	Therefore, $T_1^{-1}([1,m]) = T_2^{-1} ([1,m])$ and so from the perspective of \autoref{thm:cells_and_entries_of_restricted_tableaux} we remove the same set of cells from $T_1$ to obtain $T_1^{>m}$ as we remove from $T_2$ to obtain $T_2^{>m}$. That is,	$\sh(T^{>m}_1) = \sh(T^{>m}_2)$.
	
	If $m = i$, set $(r_k,c_k) := T_1^{-1}(k)$ for $k= i,i+1$, $\gamma_1 := \osh(T^{>i}_1)$ and $\gamma_2 := \osh(T^{>i}_2)$. 
	We assume that all composition diagrams appearing here are moved to the bottom of $\alpha$.
	Observe that as $T_2 = \fs_i T_1$, one obtains $\sh(T^{>i}_2)$ from $\sh(T^{>i}_1)$ by moving the cell $(r_{i+1}, c_{i+1})$ to the position $(r_i,c_i)$. Since $\ish(T^{>i}_2) = \beta = \ish(T^{>i}_1)$, we obtain $\gamma_2$ from $\gamma_1$ by this movement.
	 From $i\in \nAD(T_1)$ follows $c_i < c_{i+1}$. That is, we obtain $\gamma_2$ from $\gamma_1$ by moving a cell strictly to the left. Then the definition of $\dleq$ implies $\gamma_2 \dlneq \gamma_1$.
\end{proof}

 \begin{exa} 
 	The $\He$-action on tableaux and the corresponding chains of the composition poset is shown below.
 	\begin{align*}
 	\begin{array}{c|ccccccc}
 	T & \osh(T^{>3}) &-& \osh(T^{>2}) &-& \osh(T^{>1})  &-& \osh(T^{>0})\\ \hline
 	\begin{ytableau}
 	1 \\
 	*(gray) & *(gray) & 3 \\
 	*(gray) & 2
 	\end{ytableau}
 	& \ydiagram{2,1} 
 	&-& \ydiagram{3,1} 
 	&-& \ydiagram{3,2} 
 	&-& \ydiagram{1,3,2} \\
 	\downarrow \pi_2 &&& \downarrow \pi_2 
 	\\
 	\begin{ytableau}
 	1 \\
 	*(gray) & *(gray) & 2 \\
 	*(gray) & 3
 	\end{ytableau}
 	& \ydiagram{2,1} 
 	&-& \ydiagram{2,2} 
 	&-& \ydiagram{3,2} 
 	&-& \ydiagram{1,3,2} \\
 	\downarrow \pi_1 &&&&& \downarrow \pi_1
 	\\
 	\begin{ytableau}
 	2 \\
 	*(gray) & *(gray) & 1 \\
 	*(gray) & 3
 	\end{ytableau}
 	& \ydiagram{2,1} 
 	&-& \ydiagram{2,2} 
 	&-& \ydiagram{1,2,2} 
 	&-& \ydiagram{1,3,2} \\	
 	\end{array}
 	\end{align*}
 \end{exa}
Let $\ab$ be a skew composition, $E\in \mc E(\ab)$ and $T_1,T_2 \in E$ be such that $T_1\pleq T_2$.
Recall that for each saturated chain from $T_1$ to $T_2$ in $E$ the index set of the 0-Hecke operators establishing the covering relations within the chain is  $\supp(\col_{T_2}\col^{-1}_{T_1})$.
As a consequence of \autoref{thm:dominance_and_hecke_covering} we obtain a criterion for determining whether an operator $\pi_i$ appears in the saturated chains from $T_1$ to $T_2$ or not.

\begin{prop}
	\label{thm:support_of_column_word_and_shape}
	Let $\ab$ be a skew composition of size $n$,  $i\in [n-1], E\in \mc E(\ab)$ and $T_1,T_2\in E$ be such that $T_1\pleq T_2$. 
	The following statements are equivalent.
	\begin{compactenum}
		\item $i\in \supp(\col_{T_2}\col^{-1}_{T_1})$.
		%\item $\osh(T_2^{>i}) \neq \osh(T_1^{>i})$.
		\item $\sh(T_2^{>i}) \neq \sh(T_1^{>i})$.
	\end{compactenum}
\end{prop}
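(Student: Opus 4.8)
The plan is to track how the outer shape $\osh(\cdot^{>i})$ evolves along a single saturated chain from $T_1$ to $T_2$ and to read off from \autoref{thm:dominance_and_hecke_covering} that it moves precisely at the steps where the operator $\pi_i$ is applied. First I would fix a saturated chain
\begin{align*}
    T_1 = S_0 \pleq S_1 \pleq \cdots \pleq S_N = T_2, \qquad S_l = \pi_{j_l} S_{l-1}, \quad j_l \in \nAD(S_{l-1}).
\end{align*}
By the discussion preceding the statement (which rests on \autoref{thm:E_isomorphic_to_bruhat_interval} and the word property), the set of operator indices occurring in this chain is exactly $\set{j_1, \dots, j_N} = \supp(\col_{T_2}\col^{-1}_{T_1})$. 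Since every $S_l$ lies in $E$, each restricted tableau $S_l^{>i}$ has inner shape $\ish(S_l^{>i}) = \beta$, so equality of the shapes $\sh(S_l^{>i})$ reduces to equality of the outer shapes $\osh(S_l^{>i})$. In particular, statement (2) is equivalent to $\osh(T_2^{>i}) \neq \osh(T_1^{>i})$.

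Next I would apply \autoref{thm:dominance_and_hecke_covering} to each covering step $S_{l-1} \mapsto S_l = \pi_{j_l} S_{l-1}$, taking $m = i$ there: the outer shape satisfies $\osh(S_l^{>i}) = \osh(S_{l-1}^{>i})$ whenever $j_l \neq i$, and $\osh(S_l^{>i}) \dlneq \osh(S_{l-1}^{>i})$ whenever $j_l = i$. To turn the preorder relation into an additive quantity I would introduce the partial column sums $\Phi_k(\gamma) := \sum_{j=1}^k |\gamma|_j$. By the definition of $\dleq$, a step with $j_l = i$ satisfies $\Phi_k(\osh(S_l^{>i})) \geq \Phi_k(\osh(S_{l-1}^{>i}))$ for every $k$, with strict inequality for at least one $k$, while a step with $j_l \neq i$ leaves every $\Phi_k$ unchanged. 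Hence, for each fixed $k$, the sequence $\bigl(\Phi_k(\osh(S_l^{>i}))\bigr)_{l=0}^{N}$ is non-decreasing in $l$.

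Finally, I would conclude both implications from this monotonicity. If $i \notin \supp(\col_{T_2}\col^{-1}_{T_1})$, then no step has $j_l = i$, so $\osh(\cdot^{>i})$ is constant along the chain, giving $\osh(T_2^{>i}) = \osh(T_1^{>i})$ and hence the failure of (2). If $i \in \supp(\col_{T_2}\col^{-1}_{T_1})$, then at least one step has $j_l = i$; at that step some $\Phi_k$ strictly increases, and since no step ever decreases any $\Phi_k$, this strict gain survives to the end, yielding $\Phi_k(\osh(T_2^{>i})) > \Phi_k(\osh(T_1^{>i}))$ for some $k$, so that $\osh(T_2^{>i}) \neq \osh(T_1^{>i})$ and (2) holds. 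The main obstacle I anticipate is exactly what forces this detour through the $\Phi_k$: because $\dleq$ is only a preorder and not antisymmetric, one cannot argue directly that a chain containing a strict descent $\dlneq$ is unable to return to its starting composition. Passing to the genuinely monotone integer monovariants $\Phi_k$ — which can only increase and never decrease along the chain — is precisely what rules out such cancellation and makes the equivalence go through.
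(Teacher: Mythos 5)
Your proof is correct and takes essentially the same route as the paper: apply \autoref{thm:dominance_and_hecke_covering} to each covering step of a saturated chain, observe that steps with $j_l \neq i$ leave $\osh(\cdot^{>i})$ unchanged while steps with $j_l = i$ strictly increase some partial column sum, and conclude; your explicit monovariant $\Phi_k$ is exactly the content the paper compresses into the phrase ``the fact that $\dleq$ is a preorder.'' One small precision: the strict increase of some $\Phi_k$ at a $\pi_i$-step is not a formal consequence of the relation $\dlneq$ alone (distinct compositions with the same underlying partition have identical $\Phi_k$ for all $k$, e.g.\ $(2,1)$ and $(1,2)$), but it does follow from the proof of that lemma, where the shape changes by moving a cell strictly to the left.
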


\begin{proof}	 
	\autoref{thm:dominance_and_hecke_covering} applied to each covering relation in a saturated chain from $T_1$ to $T_2$ in $E$ and the fact that $\dleq$ is a preorder imply 
	\begin{align*}
		i\in \supp(\col_{T_2}\col^{-1}_{T_1}) \iff \osh(T_2^{>i}) \neq \osh(T_1^{>i}).
	\end{align*}
	From this the claim follows since $\ish(T^{>i}_1) = \beta = \ish(T^{>i}_2)$.
\end{proof}

\section{The endomorphism ring of \texorpdfstring{$\bs S_{\alpha,E}$}{S\_a,E}}
\label{sec:endomorphism_ring_of_0-Hecke_modules}

 For each $\alpha \vDash n$ there is an equivalence class $E_\alpha\in \mc E(\alpha)$ such that for all $T \in E_\alpha$ the entries increase in each column from top to bottom \cite[Section 8]{Tewari2015}. 
In \cite{Tewari2015}, Tewari and van Willigenburg showed that $\bs S_{\alpha,E_\alpha}$ is indecomposable.

In this section, we show for all $E\in \mc E(\alpha)$ that  $\End_{\He}(\se) = \C\id$ and hence $\se$ is indecomposable; this extends the result of Tewari and van Willigenburg to the general case.
By \autoref{thm:decomposition_in_equivalence_classes} we then have the desired decomposition of $\sa$.
In contrast, skew modules $\sse$ can be decomposable. We give an example at the end of the section.

We fix some notation that we use in the entire section unless stated otherwise.
Let $\alpha\vDash n$, $E\in \mc E(\alpha)$ and $T_0 := T_{0,E}$ be the source tableau of $E$.
Moreover, let $f\in \End_{\He}(\se)$, $v:=f(T_0)$ and $v = \sum_{T\in E} a_T T$ be the expansion of $v$ in the $\C$-basis $E$.
Since $\se$ is cyclically generated by $T_0$, $f$ is already determined by $v$.
The \emph{support} of $v$ is given by $\supp(v)=\set{T\in E \mid a_T \neq 0}$.
Our goal is to show that $T_0$ is the only tableau that may occur in $\supp(v)$ since then $f = a_{T_0} \id \in \C\id$.
We begin with a property holding for $\supp(v)$ that also appeared in the proof of \cite[Theorem 7.8]{Tewari2015}.

\begin{lem}
	\label{thm:descent_set_of_support_of_v}
	If $T\in \supp(v)$ then $\D(T) \subseteq \D(T_0)$.
\end{lem}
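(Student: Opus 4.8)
The plan is to exploit that $f$ commutes with every generator $\pi_i$ together with the explicit action of \autoref{thm:sct_modules}, handling one ascent of $T_0$ at a time. First I would fix $i\in\DC(T_0)$. Since $i\notin\D(T_0)$, the action rule gives $\pi_i T_0 = T_0$, and because $f$ is an $\He$-endomorphism,
\[
\pi_i v = \pi_i f(T_0) = f(\pi_i T_0) = f(T_0) = v.
\]
So $v$ is fixed by $\pi_i$ for every ascent $i$ of $T_0$, and the goal becomes extracting information about $\supp(v)$ from this.

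Next I would expand $\pi_i v = v$ in the basis $E$. Partition $E$ into $A=\set{T\mid i\in\AD(T)}$, $B=\set{T\mid i\in\nAD(T)}$ and $C=\set{T\mid i\notin\D(T)}$, on which $\pi_i$ acts by $0$, by $T\mapsto s_i T$, and by the identity, respectively. Substituting into $\pi_i v = v$ and cancelling the common terms indexed by $C$ leaves
\[
\sum_{T\in B} a_T\, s_i T = \sum_{T\in A\cup B} a_T\, T.
\]
The key step is to separate the two sides. I claim every $s_i T$ with $T\in B$ lies in $C$: since $\pi_i$ is idempotent, $\pi_i(s_i T)=\pi_i(\pi_i T)=\pi_i T = s_i T$, so $s_i T$ is fixed by $\pi_i$; were $i$ a descent of $s_i T$ the action would send $s_i T$ either to $0$ or to $s_i(s_i T)=T\neq s_i T$, a contradiction. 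Hence $i\notin\D(s_i T)$, so $s_i T\in C$.

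Therefore the left-hand side lies in $\spa_\C C$ while the right-hand side lies in $\spa_\C(A\cup B)$; as $C$ and $A\cup B$ are disjoint subsets of the basis $E$, both sides must vanish. In particular $\sum_{T\in A\cup B} a_T\, T = 0$ forces $a_T=0$ for every $T\in A\cup B = \set{T\in E\mid i\in\D(T)}$. Thus no tableau $T$ with $i\in\D(T)$ lies in $\supp(v)$, and letting $i$ range over $\DC(T_0)$ yields $\D(T)\subseteq\D(T_0)$ for every $T\in\supp(v)$. I expect the only genuine subtlety to be the separation step, namely confirming that the swap turns a non-attacking descent $i$ into an ascent so that the images $s_i T$ ($T\in B$) are disjoint from $A\cup B$; the idempotency argument above settles this cleanly and avoids any case analysis on column coordinates.
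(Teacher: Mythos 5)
Your proof is correct and follows essentially the same route as the paper's: both exploit $\pi_i v = v$ for an ascent $i$ of $T_0$ and use the idempotency $\pi_i^2=\pi_i$ to conclude that no tableau having $i$ as a descent can appear in $\pi_i v$. The only difference is presentational — you partition the basis $E$ and separate supports globally, while the paper compares the coefficient of a single offending tableau $T_*$.
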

\begin{proof} Let $T_*\in E$ be such that $\D(T_*)\not \subseteq \D(T_0)$. 
	Then there is an $i\in \D(T_*)\cap \DC(T_0)$. 
	 Because $i\in \DC(T_0)$, $\pi_i v = f(\pi_i T_0) = v$. Thus, $a_{T_*}$ is the coefficient of $T_*$ in $\pi_i v =  \sum_{T \in E} a_T \pi_i T$. But this coefficient is $0$ as  $\pi_iT \neq T_*$ for all $T\in E$. To see this assume that there is a $T\in E$ such that $\pi_i T = T_*$. Then we obtain a contradiction as
	 \[
	 T_* \neq \pi_i T_* = \pi_i^2 T = \pi_i T = T_*.  \qedhere
	 \]	
\end{proof}

Let $T\in E$ be such that $T\neq T_0$ and $D(T)\subseteq D(T_0)$.
Thanks to  \autoref{thm:descent_set_of_support_of_v} it remains to show $a_T=0$.  To do this we use a 0-Hecke operator $\pi_\sigma$ where $\sigma= s_{j-1}\cdots s_i$ and $i$ and $j$ are given by
\begin{align}
	 \label{eq:def_of_i_and_j}
	 \begin{aligned}
	 i &= \max \big\{ k \in [n] \mid T^{-1}(k) \neq T^{-1}_0(k) \big\}, \\
	 j &= \min \big\{k \in [n] \mid k>i \text{ and } i\att_{T_0} k \big\}.
	 \end{aligned}
\end{align}
That is, $i$ is the greatest entry whose position in $T$ differs from that in $T_0$ and $j$ is the smallest entry in $T_0$ which is greater than $i$ and attacked by $i$ in $T_0$. At this point it is not clear that $j$ is well defined, and the following two lemmas are devoted to show this. 

\begin{samepage}
\begin{exa}
	Consider the equivalence class $E$ from \autoref{fig:example_for_E}. Then $T_0 = T_{0,E}$ and there are exactly one other tableau $T$ in $E$ with $\D(T)\subseteq \D(T_0)$:
	\begin{align*}
	T_0
	= 
	\begin{ytableau} 
	1 \\ 
	6 & 5 & \bs 4 & 3 \\ 
	8 & 7 & \bs 2 \\ 
	\end{ytableau}  
	\overset{\pi_1}{\longrightarrow}
	T = 
	\begin{ytableau} 
	\bs 2 \\ 
	6 & 5 & \bs 4 & 3 \\ 
	8 & 7 & 1 \\ 
	\end{ytableau}
	\end{align*}
	Defining $i$ and $j$ for $T$ as in \eqref{eq:def_of_i_and_j}, we obtain $i=2$ and $j=4$.
	Note that $2\in D(T_0)$. This property holds in general by the following result.
\end{exa}	
\end{samepage}

\begin{lem} 	
	\label{thm:i_is_descent}
	Let $T\in E$ be such that $T\neq T_0$ and $\D(T)\subseteq \D(T_0)$ and set
	\begin{align*}
	i = \max \big\{k \in [n] \mid T^{-1}(k) \neq T^{-1}_0(k)\big\}.
	\end{align*}
	Then $i\in \D(T_0)$.
\end{lem}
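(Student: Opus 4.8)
The plan is to argue by contradiction: I assume $i\notin\D(T_0)$, i.e.\ $i\in\DC(T_0)$, and derive that $T^{-1}(i)=T_0^{-1}(i)$, contradicting the maximality defining $i$. First I would record the two structural facts that drive everything. By the choice of $i$, every entry $k>i$ occupies the same cell in $T$ and in $T_0$. In particular the largest entry $n$ always sits at the bottom of the first column of any SCT of shape $\alpha$ (it is the maximum of its row, hence leftmost, and the first column increases downward), so $n\notin\{k:T^{-1}(k)\neq T^{-1}_0(k)\}$ and therefore $i\le n-1$, which also makes the assertion well posed. Since $T_0$ is the source tableau, $T_0\pleq T$ by \autoref{thm:source_and_sink_tableau}, and the assumption $i\in\DC(T_0)$ together with $\DC(T_0)=\ND(T_0)$ forces $i+1$ to be the immediate left neighbor of $i$ in $T_0$; writing $T^{-1}_0(i)=(r,c)$ this says $T^{-1}_0(i+1)=(r,c-1)$ with $c\ge 2$, and as $i+1>i$ also $T^{-1}(i+1)=(r,c-1)$.

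Next I would bound the column of $i$ in $T$ from both sides. For the lower bound, $\D(T)\subseteq\D(T_0)$ gives $\DC(T_0)\subseteq\DC(T)$, so $i\in\DC(T)$ and hence $c_T(i)>c_T(i+1)=c-1$, i.e.\ $c_T(i)\ge c$. For the upper bound I would invoke \autoref{thm:support_of_column_word_and_shape}: for every $k\ge i$ all entries $>k$ are $>i$, hence fixed, so $\sh(T^{>k})=\sh(T_0^{>k})$ and thus $k\notin\supp(\col_T\col_{T_0}^{-1})$. Consequently $\supp(\col_T\col_{T_0}^{-1})\subseteq[1,i-1]$, so by the discussion following \autoref{thm:E_isomorphic_to_bruhat_interval} the operators realizing the covering relations of any saturated chain from $T_0$ to $T$ in $E$ all have index $\le i-1$. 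The only such operator that moves the entry $i$ is $\pi_{i-1}$, and whenever it is applied at an intermediate tableau $U$ we have $i-1\in\nAD(U)\subseteq\D(U)$, hence $c_U(i-1)\le c_U(i)$; the swap then places $i$ in the old cell of $i-1$, i.e.\ weakly to the left. Thus $c_T(i)\le c_{T_0}(i)=c$, and combining the two bounds yields $c_T(i)=c$.

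Finally I would pin down the row using $T\sim T_0$. In column $c$ the cells carrying entries $>i$ are identical in $T$ and $T_0$, and each such entry exceeds every entry $\le i$; since $\sim$ preserves the relative order of the entries in a column, the cell carrying the largest entry $\le i$ is the same in both tableaux. In $T_0$ this cell is $(r,c)$, occupied by $i$ (the global maximum among entries $\le i$). Because $c_T(i)=c$, the entry $i$ lies in column $c$ of $T$ as well and is again the largest entry $\le i$ there, so it must occupy $(r,c)$; hence $T^{-1}(i)=(r,c)=T^{-1}_0(i)$, contradicting $T^{-1}(i)\neq T^{-1}_0(i)$. The hard part will be the upper bound $c_T(i)\le c_{T_0}(i)$: this is where the global support computation of \autoref{thm:support_of_column_word_and_shape} must be combined with the local observation that, under the only admissible generator moving $i$, the entry $i$ can only drift left. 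Once both column bounds are in place the equivalence relation $\sim$ closes the argument cleanly.
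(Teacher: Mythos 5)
Your proof is correct, but it takes a genuinely different route from the paper's. Both arguments open identically (contradiction via $i\in \DC(T_0)$, ruling out $i=n$ by the forced position of $n$, and exploiting $\DC(T_0)=\ND(T_0)$), but from there the paper stays entirely inside the two tableaux: it partitions $[n]$ into the connected horizontal strips of $T_0$ cut out by $\D(T_0)$, locates the cell of $T_0$ that carries the entry $i$ of $T$, and in a two-case analysis on $c_T(i)$ exhibits a pair of cells in a common column whose relative order differs in $T$ and $T_0$, contradicting $T\sim T_0$. You instead import the chain machinery of \autoref{sec:0-hecke_operation_and_chains}: since all entries above $i$ are fixed, \autoref{thm:support_of_column_word_and_shape} confines $\supp(\col_T\col_{T_0}^{-1})$ to $[1,i-1]$, so along any saturated chain from $T_0$ to $T$ only $\pi_{i-1}$ can touch the entry $i$ and it can only push it (weakly, in fact strictly) leftward, while the ascent condition $i\in\DC(T)$ together with the fixed position of $i+1$ pushes it rightward; the two bounds trap $c_T(i)=c_{T_0}(i)$ and the column-order relation $\sim$ then pins the row. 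This is legitimate and non-circular --- \autoref{thm:support_of_column_word_and_shape} is proved independently of this lemma --- and it is precisely the device the paper deploys only later, in the proof of \autoref{thm:position_of_entries_smaller_as_attacked_entry_in_T}. Your version buys a shorter, case-free argument (indeed you could shortcut further: $i$ must be moved at least once and each move is strictly leftward, since a non-attacking descent $i-1$ forces $c_U(i-1)<c_U(i)$, so $c_T(i)<c_{T_0}(i)$ already clashes with the lower bound without the final row-pinning step); the paper's version buys self-containedness, using nothing beyond the combinatorics of source tableaux and $\sim$.
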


\begin{proof}
	Let $T$, $T_0$ and $i$ be given as in the assertion. 
	%We do not demand that $T$ and $T_0$ are straight and set up some notation first. 
	We introduce indices such that
	$D(T_0)= \set{ d_1 < d_2 < \cdots < d_m}$ and set $d_0 := 0, d_{m+1} := n$.
	 Define $I_k$ := $[d_{k-1}+1 , d_k]$ for $k= 1, \dots, m+1$. 
	 Recall that since $T_0$ is a source tableau, $\DC(T_0)= \ND(T_0)$ by \autoref{thm:source_and_sink_tableau}. 
	 That is, $a+1$ is the left neighbor of $a$ for each ascent $a$ of $T_0$. Therefore, we have $I_k\setminus \set{d_k} \subset \ND(T_0)$ and conclude that $T^{-1}_0(I_k)$ is a connected horizontal strip (a one-row diagram which contains all cells between its leftmost and rightmost cell) for $k = 1,\dots,m+1$.
	 
	Set $\cell_k := T_0^{-1}(k)$ for $k=1,\dots, n$ and let $x$ be the index such that $T(\cell_x)=~i$. 
	Since $T_0$ and $T$ are straight tableaux, the ordering conditions of SCTx imply $T^{-1}(n)= (l(\alpha), 1) = T_0^{-1}(n)$. Therefore  $i\neq n$ and we now show $i\notin \DC(T_0)$.	
	
	Assume for sake of contradiction that $i\in \DC(T_0)$.
	Let $l\in [m+1]$ be such that $i\in I_l$. 
	Since $i \in \DC(T_0)$, $i<d_l$ and $i+1\in I_l$. 
	The strip $T^{-1}_0(I_l)$ looks as follows:
	\begin{align}
	\label{eq:horizontal_strip_of_i}
	\cell_{d_{l}} \cell_{d_{l} -1} \cdots  \cell_{i+1} \cell_i \cdots \cell_{d_{l-1} + 1}
	\end{align}	
%
%	Same picture using ytableau	
%
%	\begin{align}
%	\label{eq:horizontal_strip_of_i}
%	\ydiagram{1}_{\, d_{l}} \ydiagram{1}_{\, d_{l} -1} \cdots  \ydiagram{1}_{\, i+1}  \ydiagram{1}_{\, i} \cdots \ydiagram{1}_{\, d_{l-1} + 1}
%	\end{align}	
%
	By choice of $i$ we have 
	\begin{align}
	\label{eq:entries_of_T_greater_i}
	\text{$T(\cell_k) = k$ for $k=i+1,\dots, n$ and 	$T(\cell_i) < i$.}
	\end{align}
	Since entries decrease in rows of $T$, \eqref{eq:horizontal_strip_of_i} implies 
	\begin{align}
	\label{eq:entries_of_T_in_horizontal_strip_smaller_i}
	T(\cell_k) < i \text{ for } k=d_{l-1}+1,\dots , i.  
	\end{align}
	Combining  \eqref{eq:entries_of_T_greater_i} and \eqref{eq:entries_of_T_in_horizontal_strip_smaller_i} we obtain
	\begin{align}
	\label{eq:bound_for_x}
	 x\leq d_{l-1}.
	\end{align}
	We deal with two cases depending on $c_T(i)$. 
	In both cases we will end up with a contradiction.
	
	\textbf{Case 1} $c_T(i) \leq c_{T_0}(d_{l-1}+1)$. From $\D(T) \subseteq \D(T_0)$ follows $i\in \DC(T)$ and thus $c_T(i+1) < c_T(i) $. 
	Using $c_{T_0}(i) = c_{T_0}(i+1) + 1 =  c_{T}(i+1) + 1$, we obtain
	$c_{T_0}(i) \leq c_T(i) \leq  c_{T_0}(d_{l-1}+1).$
	Then there is a $y\in [d_{l-1}+1, i]$ such that $\cell_x$ and $\cell_y$ are in the same column. 
	On the one hand, from \eqref{eq:entries_of_T_in_horizontal_strip_smaller_i} follows $T(\cell_y) < i = T(\cell_x)$. 
	On the other hand, the choice of $y$ and \eqref{eq:bound_for_x} imply $y>d_{l-1} \geq x$ and hence $T_0(\cell_y) =y> x = T_0(\cell_x)$.	
	That is, in the column of $\cell_x$ and $\cell_y$ the relative order of entries in $T$ differs from that in $T_0$.
	So $T\not \sim T_0$ which contradicts the assumption $T,T_0 \in E$.
	
	\textbf{Case 2}
	$c_T(i) > c_{T_0}(d_{l-1}+1)$. This case is illustrated in \autoref{fig:i_is_descent}. Since by \eqref{eq:bound_for_x} $x\leq d_{l-1}$, there is a $1\leq p\leq l-1$ such that $x\in I_p$.
	The leftmost cell of the connected horizontal strip $T^{-1}_0(I_p)$ is $\cell_{d_p}$. As entries decrease in rows of $T$ from left to right, we have $T(\cell_{d_p})\geq T(\cell_x) = i$. In addition, from the choice of $p$ and \eqref{eq:entries_of_T_greater_i} follows that $T(\cell_{d_p})\leq i$. Thus, $d_p=x$.
	
	From $d_p=x$ we obtain $d_p \neq d_{l-1}$ since
	\begin{align*}
	c_{T_0}(d_{l-1}) \leq c_{T_0}(d_{l-1}+1) < c_T(i) = c_{T_0}(d_p)
	\end{align*}
	where the first inequality follows from $d_{l-1}\in \D(T_0)$.
	We claim that there exists an index $y\in [d_p +1, d_{l-1} -1]$ such that $\cell_y$ and $\cell_{d_p}$ are located in the same column.
	To prove the claim, assume for sake of contradiction that this is not the case. Then $d_p \in \D(T_0)$ implies $ c_{T_0}(d_p)< c_{T_0}(d_p+1)$. Thus, from $\DC(T_0)= \ND(T_0)$ and induction follows $c_{T_0}(d_p)< c_{T_0}(z)$ for all $z\in [d_p +1, d_{l-1} -1]$. As a consequence
		\begin{align*}
		c_{T_0}(d_{l-1}) < c_{T_0}(d_p) < c_{T_0}(d_{l-1}-1).
		\end{align*}
		In other words,  $d_{l-1}-1$ is an ascent of $T_0$ which is not a neighbor of $d_l$. This is a contradiction to the fact that $T_0$ is a source tableau and finishes the proof of the claim.
	
	Now, let $y$ be as claimed above.  Then $y\in  [d_p +1, d_{l-1} -1]$ and in particular  $y\neq d_p= x$. Hence, \eqref{eq:entries_of_T_greater_i} implies $T(\cell_y) <i$ and so $T(\cell_y) < i = T(\cell_{d_p})$ . 
 On the other hand, $y\in  [d_p +1, d_{l-1} -1]$ yields $T_0(\cell_y) =y > d_p = T_0(\cell_{d_p})$. As in Case 1, this is a contradiction to $T,T_0\in E$.
\end{proof}

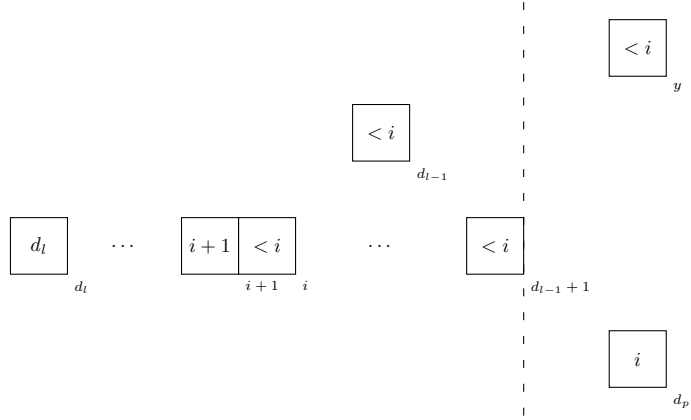
\begin{figure}
	\begin{tikzpicture}[scale=.75, transform shape]
	
	\def \x{-3}
	\def\y{0}
	\node at (\x,\y) {$d_l$};
	\draw  (\x-0.5,\y+0.5) rectangle (\x+0.5,\y-0.5);
	\node[below right]  at (\x+0.5,\y-0.5) {\scriptsize $d_l$};
	
	\def \x{-1.5}
	\def\y{0}
	\node at (\x,\y) {$\dots$};
	
	\def \x{0}
	\def\y{0}
	\node at (\x,\y) {$i+1$};
	\draw  (\x-0.5,\y+0.5) rectangle (\x+0.5,\y-0.5);
	\node[below right]  at (\x+0.5,\y-0.5) {\scriptsize $i+1$};
	
	\def \x{1}
	\def\y{0}
	\node at (\x,\y) {$<i$};
	\draw  (\x-0.5,\y+0.5) rectangle (\x+0.5,\y-0.5);
	\node[below right]  at (\x+0.5,\y-0.5) {\scriptsize $i$};
	
	\def \x{3}
	\def\y{0}
	\node at (\x,\y) {$\dots$};
	
	\def \x{3}
	\def\y{2}
	\node at (\x,\y) {$<i$};
	\draw  (\x-0.5,\y+0.5) rectangle (\x+0.5,\y-0.5);
	\node[below right]  at (\x+0.5,\y-0.5) {\scriptsize $d_{l-1}$};
	
	\def \x{5}
	\def\y{0}
	\node at (\x,\y) {$<i$};
	\draw  (\x-0.5,\y+0.5) rectangle (\x+0.5,\y-0.5);
	\node[below right]  at (\x+0.5,\y-0.5) {\scriptsize $d_{l-1}+1$};
	
	\draw[loosely dashed] (\x+0.5,\y-3) -- (\x+0.5,\y+4.5);
	
	\def \x{7.5}
	\def\y{-2}
	\node at (\x,\y) {$i$};
	\draw  (\x-0.5,\y+0.5) rectangle (\x+0.5,\y-0.5);
	\node[below right]  at (\x+0.5,\y-0.5) {\scriptsize $d_{p}$};
	
	\def\y{3.5}
	\node at (\x,\y) {$<i$};
	\draw  (\x-0.5,\y+0.5) rectangle (\x+0.5,\y-0.5);
	\node[below right]  at (\x+0.5,\y-0.5) {\scriptsize $y$};
	
	\end{tikzpicture} 
	\caption{
		\label{fig:i_is_descent}
		An example for the positions of cells and entries in the tableau $T$ from Case 2 of the proof of \autoref{thm:i_is_descent}.}
\end{figure}

Note that the $i$ appearing in the following Lemma is not the same as in \eqref{eq:def_of_i_and_j}.

\begin{lem}
	\label{thm:attacked_entry_exists}
	For all $i\in \D(T_0)$ there exists a $k\in T_0$ such that $k>i$ and $i \att_{T_0} k$.
\end{lem}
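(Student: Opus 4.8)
The plan is to fix an arbitrary descent $i\in\D(T_0)$ and to locate an attacked entry directly from the descent condition, reading off the geometry of the straight shape $\alpha$. Write $(r,c):=T_0^{-1}(i)$ for the cell of $i$. Since $\D(T_0)\subseteq[n-1]$ we have $i<n$, so $i+1$ is an entry of $T_0$; put $(r',c'):=T_0^{-1}(i+1)$. The definition of the descent set immediately gives $c=c_{T_0}(i)\le c_{T_0}(i+1)=c'$, that is, the cell of $i+1$ lies weakly to the right of the cell of $i$.

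The naive attempt is to take $k=i+1$. This succeeds precisely when $i+1$ occupies column $c$ or column $c+1$ strictly below row $r$, since those are the cells attacked by $(r,c)$. The main obstacle is that $i+1$ need not sit in such a position: it may lie several columns to the right, or in column $c+1$ but above row $r$, and then $i\natt i+1$. The idea that removes this obstacle is that a same-column attack reaches every other row, so I can ``project'' $i+1$ horizontally back into column $c$. Concretely, because $\alpha$ is a straight shape, row $r'$ is contiguous starting from the first column; as $c\le c'$ this forces $(r',c)\in\alpha$. Since entries strictly decrease along rows from left to right, $T_0(r',c)\ge T_0(r',c')=i+1>i$.

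It then only remains to verify that $(r,c)$ attacks $(r',c)$. As $T_0(r',c)>i=T_0(r,c)$, the two cells carry distinct entries and hence are distinct cells; lying in the common column $c$, they must occupy different rows, so $r'\neq r$. Thus $(r,c)\att(r',c)$ by the same-column clause of the attack relation, and setting $k:=T_0(r',c)$ yields $k>i$ with $i\att_{T_0}k$, which is exactly the claim.

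I expect no real difficulty beyond carefully unwinding the definitions: the single subtlety is recognizing that the attack relation allows same-column attacks in both vertical directions, which is what lets one bypass the awkward placements of $i+1$. It is worth noting that this argument uses only that $T_0$ is a straight SCT and that $i$ is a descent; the source-tableau hypothesis $\DC(T_0)=\ND(T_0)$ is not actually needed here, though one could alternatively run the argument through the horizontal-strip (``run'') structure of source tableaux exploited in the proof of \autoref{thm:i_is_descent}.
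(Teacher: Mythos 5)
Your proof is correct and follows essentially the same route as the paper's: use the descent condition $c_{T_0}(i)\le c_{T_0}(i+1)$ together with straightness to see that the cell $(r_{T_0}(i+1),\,c_{T_0}(i))$ lies in the shape, and take $k$ to be its entry, which is $\ge i+1$ by row-decrease and is attacked via the same-column clause. Your closing observation that the source-tableau hypothesis is not needed is also consistent with the paper, which likewise uses only straightness and the descent condition.
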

\begin{proof}
	Let $i\in \D(T_0)$. Then $c(i) \leq c(i+1)$ and thus $r(i) \neq r(i+1)$. Since $T_0$ is straight by assumption, the cell $(r(i+1), c(i))$ is contained in the shape of $T_0$. Let $k$ be the entry of $T_0$ in that cell. Then $i\att k$ and $k\geq i+1 $  as entries decrease in rows.
\end{proof}

%\begin{exa}	
%	In general skew case \autoref{thm:attacked_entry_exists} is not true. In the tableau
%	\begin{align*}
%	 T_0 =
%	\begin{ytableau}
%		1 \\
%		*(gray) & *(gray) & 2
%	\end{ytableau}
%	\end{align*}
%the descent $1$ does not attack any entry.
%\end{exa}

Let $T$, $i$ and $j$ as in \eqref{eq:def_of_i_and_j}. From \autoref{thm:i_is_descent} and \autoref{thm:attacked_entry_exists} follows that $j$ is well defined. We proceed by considering the relative positions of $i$ and $[i+1,j]$ first in $T_0$ and then in $T$. This will allow to deduce useful properties of the operator $\pi_\sigma$ to be defined in \autoref{thm:end:annihilator_for_T_0}.  In the following Lemma, $i$ is slightly more general as in \eqref{eq:def_of_i_and_j}.

\begin{lem}
	\label{thm:position_of_entries_smaller_as_attacked_entry_in_T_0}
	Let $i\in \D(T_0)$ and set $j = \min \{k \in [n] \mid k>i \text{ and } i\att_{T_0} k \}$.
	Then $j$ is well defined and in $T_0$ $i$ is located strictly left of $[i+1,j-1]$ and does not attack $[i+1,j-1]$.
\end{lem}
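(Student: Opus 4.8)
The plan is to dispose of the easy assertions first. Well-definedness of $j$ is immediate from \autoref{thm:attacked_entry_exists}: that lemma guarantees the set $\set{k\in[n]\mid k>i \text{ and } i\att_{T_0} k}$ is nonempty, so its minimum $j$ exists, and clearly $j>i$. The claim that $i$ does \emph{not} attack $[i+1,j-1]$ is then nothing more than the minimality of $j$: unfolding the set-attack definition, $\set{i}\att[i+1,j-1]$ would mean $i\att_{T_0} k$ for some $k$ with $i<k<j$, which is impossible by the choice of $j$ as the smallest attacked entry exceeding $i$. Hence the only genuine content is the ``strictly left'' assertion, namely $c_{T_0}(i)<c_{T_0}(k)$ for all $k\in[i+1,j-1]$.

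\textbf{Column dynamics from the source-tableau structure.} To attack the ``strictly left'' part I would track the columns of the consecutive entries $i,i+1,i+2,\dots$ Writing $\cell_k:=T_0^{-1}(k)$, the key structural input is that $T_0$ is a source tableau, so \autoref{thm:source_and_sink_tableau} gives $\DC(T_0)=\ND(T_0)$; thus every ascent $a$ of $T_0$ is neighborly, i.e.\ $\cell_{a+1}$ sits immediately to the left of $\cell_a$ in the same row, so $c_{T_0}(a+1)=c_{T_0}(a)-1$. At a descent $d$, on the other hand, the definition of $\D$ yields $c_{T_0}(d)\leq c_{T_0}(d+1)$. Consequently, as $k$ increases the column $c_{T_0}(k)$ decreases by exactly one at every ascent step and weakly increases at every descent step. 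Since $i\in\D(T_0)$, the initial step gives $c_{T_0}(i+1)\geq c_{T_0}(i)$; if equality holds then $\cell_{i+1}$ and $\cell_i$ share a column, forcing $i\att_{T_0}(i+1)$ and hence $j=i+1$, in which case $[i+1,j-1]=\emptyset$ and there is nothing to prove.

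\textbf{The main argument.} So assume $c_{T_0}(i+1)>c_{T_0}(i)$, and let $k_0>i$ be the first entry with $c_{T_0}(k_0)\leq c_{T_0}(i)$ (if none exists, all entries exceeding $i$ already lie strictly right of column $c_{T_0}(i)$ and we are done at once). Then $k_0\geq i+2$ and its predecessor satisfies $c_{T_0}(k_0-1)\geq c_{T_0}(i)+1$. The step into $k_0$ must strictly decrease the column, so it cannot be a descent step (which would keep the column $\geq c_{T_0}(i)+1$); being an ascent step it lowers the column by exactly one, forcing $c_{T_0}(k_0)=c_{T_0}(i)$. But then $\cell_{k_0}$ and $\cell_i$ are distinct cells in a common column, whence $i\att_{T_0} k_0$, and minimality of $j$ gives $k_0\geq j$. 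Therefore every $k\in[i+1,j-1]$ precedes $k_0$ and so has $c_{T_0}(k)>c_{T_0}(i)$, i.e.\ $i$ is strictly left of $[i+1,j-1]$.

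The main obstacle is precisely this last step: ruling out that the column could slip to or past $c_{T_0}(i)$ before reaching $j$. The decisive observation making it work is that the column can only move one unit left at a time (at neighborly ascents) while it jumps weakly right at descents, so the very first moment it could reach column $c_{T_0}(i)$ it lands \emph{exactly} there, producing a same-column attack that by minimality of $j$ cannot occur among $[i+1,j-1]$.
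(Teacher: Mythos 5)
Your proof is correct and follows essentially the same route as the paper's: both rest on the source-tableau property $\DC(T_0)=\ND(T_0)$, which forces the column to drop by exactly one at each ascent and to weakly increase at each descent, so the first entry whose column reaches $c_{T_0}(i)$ lands in the same column as $i$ and is therefore attacked, hence at least $j$. The paper packages this as an induction along $[i+1,j-1]$ rather than your first-failure formulation, but the content is identical.
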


We illustrate \autoref{thm:position_of_entries_smaller_as_attacked_entry_in_T_0} before we prove it.

	\begin{exa} For the source tableau from above
		\[
			T_0
			= 
\begin{ytableau} 
1 \\ 
6 & 5 & \bs4 & \bs3 \\ 
8 & 7 & \bs2 \\ 
\end{ytableau}  
		\]
		and $i=2\in \D(T_0)$ we have $j = 4 =\min \{k \in [n] \mid k>i \text{ and } i\att_{T_0} k \}$ and  $\set{3} = [i+1,j-1]$. Note $2\att 4$ but $2\not \att 3$.
	\end{exa}

	\begin{proof}[Proof of \autoref{thm:position_of_entries_smaller_as_attacked_entry_in_T_0}]
		From \autoref{thm:attacked_entry_exists} follows that $j$ is well defined. 
		We set $I =: [i+1,j-1]$ and $c_l := c_{T_0}(l)$ for $l \in T_0$. By the minimality of $j$ we have $i\not \att_{T_0} I$. It remains to show that $i$ is strictly left of $I$ or equivalently that $c_i < c_l$ for all $l\in I$. We may assume $I\neq \emptyset$ and prove this by induction on the elements of $I$. 
		 Since $i\in \D(T_0)$, $c_i \leq c_{i+1}$. Moreover  $i+1\in I$ implies $i\not \att_{T_0} i+1$ and consequently $c_i < c_{i+1}$.
		
			Now assume $l>i+1$ and $c_i < c_{l-1}$. If $l-1\in \D(T_0)$ then $c_i < c_{l-1} \leq c_l$. If $l-1\in \DC(T_0)$ then $l-1\in \ND(T_0)$ as $T_0$ is a source tableau. Thus $c_l= c_{l-1}-1$ and $c_i \leq c_l$. Furthermore $c_i \neq  c_l$ since $i\not \att_{T_0} I \ni l$. \qedhere
	\end{proof}

Let $T,i$ and $j$ as in \eqref{eq:def_of_i_and_j}. By definition $i\att j$ in $T_0$. In contrast, the next Lemma shows $i\natt j$ in $T$. There, $i$ and $j$ are defined as in \eqref{eq:def_of_i_and_j}.

\begin{lem}
	\label{thm:position_of_entries_smaller_as_attacked_entry_in_T}
	Let  $T\in E$ be such that $T\neq T_0$ and $\D(T) \subseteq \D(T_0)$. Define
	\begin{align*}
		i &= \max \big\{k \in [n] \mid T^{-1}(k) \neq T^{-1}_0(k)\big\}, \\
		j &= \min \big\{k \in [n] \mid k>i \text{ and } i\att_{T_0} k\big\}.
	\end{align*}
	Then $i$ and $j$ are well defined and in $T$  $i$ appears strictly left of $[i+1,j]$ and does not attack $[i+1,j]$. 
\end{lem}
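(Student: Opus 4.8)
The plan is to reduce everything to locating the single entry $i$ in $T$ and to argue that $i$ moves strictly to the left when passing from $T_0$ to $T$. By the choice of $i$ as the largest entry with $T^{-1}(i)\neq T_0^{-1}(i)$, the tableaux $T$ and $T_0$ agree on all entries $\geq i+1$; in particular $T^{-1}(l)=T_0^{-1}(l)$ for every $l\in[i+1,j]$, so the cells occupied by $[i+1,j]$ are the same in both tableaux. By \autoref{thm:position_of_entries_smaller_as_attacked_entry_in_T_0} applied to $T_0$ these cells lie weakly to the right of the cell of $i$ in $T_0$: one has $c_{T_0}(i)<c_{T_0}(l)$ for $l\in[i+1,j-1]$ and $c_{T_0}(i)\leq c_{T_0}(j)$ with $i\att_{T_0}j$. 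Hence it suffices to control the column $c_T(i)$.

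First I would show $c_T(i)<c_{T_0}(i)$. Since $T_0$ is the source tableau we have $T_0\pleq T$, and for every $k\geq i$ the cells of $[1,k]$ coincide in $T$ and $T_0$, so $\sh(T^{>k})=\sh(T_0^{>k})$; by \autoref{thm:support_of_column_word_and_shape} this gives $\supp(\col_T\col_{T_0}^{-1})\subseteq[1,i-1]$. Realizing the saturated chain from $T_0$ to $T$ attached to a reduced word of $\col_T\col_{T_0}^{-1}$, every covering step is a non-attacking-descent flip $\pi_k$ with $k\leq i-1$; such a flip fixes all entries $\geq i+1$ and touches $i$ only when $k=i-1$, in which case $i$ is sent to the cell of $i-1$, strictly to its left because a non-attacking descent is never in the same column. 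As $i$ is displaced, at least one such flip occurs, so $c_T(i)<c_{T_0}(i)$. Combined with the previous paragraph this already yields $c_T(i)<c_{T_0}(i)\leq c_{T_0}(l)=c_T(l)$ for all $l\in[i+1,j]$, i.e. $i$ lies strictly left of $[i+1,j]$; and for $l\in[i+1,j-1]$ the column gap is at least two, so $i\natt_T l$.

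The only remaining point is $i\natt_T j$. Suppose for contradiction $i\att_T j$; since $i$ is strictly left of $j$ this must be an adjacent-column attack, so $c_T(i)=c_T(j)-1$ and $r_T(i)<r_T(j)$. Using $c_T(j)=c_{T_0}(j)$ together with $c_T(i)<c_{T_0}(i)\leq c_{T_0}(j)$ one forces $c_{T_0}(j)=c_{T_0}(i)=:d$ (so $i$ and $j$ share the column $d$ in $T_0$) and $c_T(i)=d-1$ (so exactly one $\pi_{i-1}$ flip relocates $i$). This flip sends $i$ from its $T_0$ cell $(r_{T_0}(i),d)$ to $(r_T(i),d-1)$, the cell previously holding $i-1$; its non-attacking-descent condition forces $r_T(i)\geq r_{T_0}(i)$. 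I would then show that in $T_0$ the entry $i$ lies below $j$, i.e. $r_{T_0}(i)>r_{T_0}(j)=r_T(j)$: for $j>i+1$ the source property $\DC(T_0)=\ND(T_0)$ places $j-1$ at the cell immediately right of $j$, and $i\natt_{T_0}(j-1)$ then forces $i$ below $j$; for $j=i+1$ the same conclusion follows directly from the triple rule applied to the cell left of $i$. Combining, $r_T(i)\geq r_{T_0}(i)>r_T(j)$, contradicting $r_T(i)<r_T(j)$, whence $i\natt_T j$.

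I expect the main obstacle to be this last step, establishing that $i$ sits below $j$ in their common column of $T_0$; this is where the source-tableau structure and the minimality of $j$ (through the triple rule) are genuinely needed, whereas everything else follows mechanically once the leftward movement $c_T(i)<c_{T_0}(i)$ is in hand.
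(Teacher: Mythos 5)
Your proposal is correct and follows essentially the same route as the paper: restricting $\supp(\col_T\col_{T_0}^{-1})$ to $[i-1]$ via \autoref{thm:support_of_column_word_and_shape}, deducing that $i$ moves strictly left into a non-attacking cell, and finishing with a case analysis on columns; the only variation is that in the critical sub-case $c_{T_0}(i)=c_{T_0}(j)$ you obtain $r_{T_0}(i)>r_{T_0}(j)$ for $j>i+1$ from the right neighbor $j-1$ of $j$ and $i\natt_{T_0}(j-1)$, whereas the paper uses the left neighbor of $i$ and the triple rule uniformly — both are valid. The one point you leave implicit is the well-definedness of $j$ (part of the statement): invoking \autoref{thm:position_of_entries_smaller_as_attacked_entry_in_T_0} requires $i\in\D(T_0)$, which is precisely \autoref{thm:i_is_descent} and should be cited.
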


We give an example before starting the proof of \autoref{thm:position_of_entries_smaller_as_attacked_entry_in_T}\!.

\begin{exa} Recall that in our running example $i=2$ and $j=4$ when defined for
	\begin{align*}
		T &= 
		\begin{ytableau} 
			\bs 2 \\ 
			6 & 5 & \bs4 & \bs3 \\ 
			8 & 7 & 1 \\ 
		\end{ytableau}
	\end{align*} 
	 as in \autoref{thm:position_of_entries_smaller_as_attacked_entry_in_T}. Then
   $[i+1,j] = \set{3,4}$ and $2\not \att \set{3,4}$.
\end{exa}

\begin{proof}[Proof of \autoref{thm:position_of_entries_smaller_as_attacked_entry_in_T}] 
	\autoref{thm:i_is_descent} yields $i\in \D(T_0)$ and so  \autoref{thm:attacked_entry_exists} ensures that $j$ is well defined. Set $\sigma := \col_T\col^{-1}_{T_0}$, $\cell_k := T_0^{-1}(k)$ for $k=1,\dots, n$ and let $x$ be the index such that $T(\cell_x)= i$. 

	 By choice of $i$, we have $T^{>i} = T_0^{>i}$. So, $\sh (T^{>k}) = \sh (T_0^{>k})$ for $k = i ,\dots, n$. Hence, from \autoref{thm:support_of_column_word_and_shape}  we obtain  
		\begin{align}
		\label{eq:support_of_path_to_T}
			\supp(\sigma) \subseteq [i-1].	
		\end{align}
	Let $\fs_{i_p} \cdots \fs_{i_1}$ be a reduced word for $\sigma$.
	Then $T = \pi_{i_p} \cdots \pi_{i_1} T_0$. From \eqref{eq:support_of_path_to_T} we have $i_q \neq i$ for $q=1,\dots, p$.
	Moreover, at least one $\pi_{i_q}$ has to move $i$ because the position of $i$ in $T$ differs from its position in $T_0$. Hence, there is a $q$ such that $i_q = i-1$ since $\pi_{i-1}$ and $\pi_i$ are the only operators that are able to move $i$.
	For two SCTx $T_1$ and $T_2$  such that $T_2 = \pi_{i-1}T_1= \fs_{i-1} T_1$ we have that $i-1\in \nAD (T_1)$ and thus $T_2^{-1}(i)$ is left of $T_1^{-1}(i)$ and $T_2^{-1}(i) \not \att T_1^{-1}(i)$. So, by applying $\pi_{i_p} \cdots \pi_{i_1}$ to $T_0$, $i$ is moved (possibly multiple times) strictly to the left into a cell that does not attack $\cell_i$. That is,
	\begin{align}
		\label{eq:postitoin_of_i_in_T_and_T_0}
		\text{$\cell_x$ is located strictly left of $\cell_i$ and $\cell_x\not \att \cell_i$.}
	\end{align}
	From the definition of $i$ follows that the entries $[i+1,j-1]$ have the same position in $T$ and $T_0$. By combining \eqref{eq:postitoin_of_i_in_T_and_T_0} and \autoref{thm:position_of_entries_smaller_as_attacked_entry_in_T_0} we obtain
	\begin{align}
	\text{In $T$ $i$ is located strictly left of $[i+1,j-1]$ and $i\not \att_T [i+1,j-1]$.}
	\end{align}
	Recall that $j$ has the same position in $T$ and $T_0$. 
	From \eqref{eq:postitoin_of_i_in_T_and_T_0} and $i\att_{T_0} j$ follows $c_T(i) < c_{T_0}(i)\leq c_{T_0}(j)$. Thus, $i$ is strictly left of $j$ in $T$.
	
	It remains to show $i\not \att_T j$.
	Since $i\att_{T_0} j$ either $c_{T_0}(j)=c_{T_0}(i)+1$ or $c_{T_0}(j)=c_{T_0}(i)$.
	
	 \textbf{Case 1} $c_{T_0}(j)=c_{T_0}(i)+1$. Then \eqref{eq:postitoin_of_i_in_T_and_T_0} implies $c_T(i) < c_{T_0}(i)< c_{T_0}(j) = c_T(j)$ and so $i\not \att_T j$. 
	 
	 \textbf{Case 2} $c_{T_0}(j)=c_{T_0}(i)$. If $c_T(i) < c_{T_0}(i)-1$ then $c_T(i) < c_T(j)-1$ and so $i\natt_T j$. If $c_T(i) = c_{T_0}(i)-1$ then $i$ and $j$ appear in adjacent columns of $T$ and for $i\natt_T j$ we have to show that $r_T(j) < r_T(i)$.
	  On the one hand, we have $1\leq c_T(i) <c_{T_0}(i)$ so that $i$ has a left neighbor $t>i$ in $T_0$.
	  In addition, from the first statement of  \autoref{thm:position_of_entries_smaller_as_attacked_entry_in_T_0} and  $c_{T_0}(j)=c_{T_0}(i)$ follows that $i$ is weakly left of $[i+1,j]$ in $T_0$.
	  Thus, $t>j$ and hence $r_{T_0}(j) < r_{T_0}(i)$ because otherwise $t,i$ and $j$ would violate the triple rule in $T_0$.
	 On the other hand,  $c_T(i) = c_{T_0}(i)-1$ and $i\not \att_T \cell_i$ imply $r_{T_0}(i) < r_T(i)$. All in all, $r_{T}(j) = r_{T_0}(j) < r_{T_0}(i) < r_T(i)$ and thus $i \not \att_T j$.
\end{proof}

Next, we prove useful properties of the operators $\pi_\sigma$ mentioned already in \eqref{eq:def_of_i_and_j}.

\begin{lem}
	\label{thm:end:annihilator_for_T_0}
	Keep the notation  of \autoref{thm:position_of_entries_smaller_as_attacked_entry_in_T} and set $\sigma = \fs_{j-1} \cdots \fs_{i+1} \fs_i$. Then
	\begin{compactenum}
		\item $\pi_\sigma T_0 = 0$,
		\item $\pi_\sigma T \in E$,
		\item $\sigma = \col_{\pi_\sigma T} \col^{-1}_{T}$.
	\end{compactenum}
\end{lem}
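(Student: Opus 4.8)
The plan is to treat parts (2) and (3) together and then establish (1) separately, since the former follow almost directly from the machinery already assembled. For (2) and (3), I would apply \autoref{thm:mutliple_flips_same_cell} to the tableau $T$ with the pair of entries $i$ and $j$: by \autoref{thm:position_of_entries_smaller_as_attacked_entry_in_T}, in $T$ the entry $i$ is strictly left of $[i+1,j]$ and does not attack $[i+1,j]$, which is exactly the hypothesis of \autoref{thm:mutliple_flips_same_cell}. This immediately yields that $\pi_\sigma T = \pi_{j-1}\cdots\pi_i T$ is a genuine SCT (in particular nonzero) and that $\sigma = \fs_{j-1}\cdots\fs_i$ is a reduced word for $\col_{\pi_\sigma T}\col_T^{-1}$, giving (3). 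As a byproduct this confirms that $\fs_{j-1}\cdots\fs_i$ is reduced, so $\pi_\sigma = \pi_{j-1}\cdots\pi_i$ without ambiguity. Since $\pi_\sigma T$ is a nonzero tableau obtained from $T\in E$ by applying $\pi_\sigma$, \autoref{thm:rank_in_E_and_column_word} forces $\pi_\sigma T \sim T$, hence $\pi_\sigma T \in E$, which is (2).

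For part (1), the idea is to run the same flips on $T_0$ but one step shorter, and then observe that the final operator annihilates. If $j = i+1$, then $\sigma = \fs_i$ and, since $i \in \D(T_0)$ by \autoref{thm:i_is_descent} and $i \att_{T_0} j = i+1$ by the definition of $j$, we have $i \in \AD(T_0)$, so $\pi_\sigma T_0 = \pi_i T_0 = 0$ by \autoref{thm:sct_modules}. If $j > i+1$, I would apply \autoref{thm:mutliple_flips_same_cell} to $T_0$ with the pair $i$ and $j-1$: by \autoref{thm:position_of_entries_smaller_as_attacked_entry_in_T_0}, in $T_0$ the entry $i$ is strictly left of $[i+1,j-1]$ and does not attack $[i+1,j-1]$, so the proposition produces an SCT $T_0' := \pi_{j-2}\cdots\pi_i T_0$ with $T_0'(\cell) = j-1$, where $\cell := T_0^{-1}(i)$.

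The crux is then to show $\pi_{j-1} T_0' = 0$, i.e.\ that $j-1 \in \AD(T_0')$. Since the operators $\pi_{j-2},\dots,\pi_i$ only interchange entries in $[i,j-1]$, they cannot move $j$, so $(T_0')^{-1}(j) = T_0^{-1}(j)$. By the definition of $j$ we have $i \att_{T_0} j$, that is $T_0^{-1}(i) \att T_0^{-1}(j)$; as $(T_0')^{-1}(j-1) = \cell = T_0^{-1}(i)$ and $(T_0')^{-1}(j) = T_0^{-1}(j)$, this says precisely $j-1 \att_{T_0'} j$. The attack relation places the cell of $j$ weakly right of the cell of $j-1$, so $c_{T_0'}(j-1) \le c_{T_0'}(j)$ and hence $j-1 \in \D(T_0')$; together with the attack this gives $j-1 \in \AD(T_0')$, whence $\pi_{j-1}T_0' = 0$ and $\pi_\sigma T_0 = \pi_{j-1}T_0' = 0$ by \autoref{thm:sct_modules}.

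The main obstacle is the bookkeeping in part (1): one must be sure that after the shorter sequence of flips the cell originally holding $i$ holds $j-1$ while $j$ stays put, so that the attack $i \att_{T_0} j$ transfers to an attacking descent $j-1 \att_{T_0'} j$ in $T_0'$. Everything else is a direct invocation of \autoref{thm:mutliple_flips_same_cell}, \autoref{thm:position_of_entries_smaller_as_attacked_entry_in_T_0}, \autoref{thm:position_of_entries_smaller_as_attacked_entry_in_T} and \autoref{thm:rank_in_E_and_column_word}.
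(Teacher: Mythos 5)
Your proposal is correct and follows essentially the same route as the paper: parts (2) and (3) via \autoref{thm:mutliple_flips_same_cell} applied to $T$ with $i$ and $[i+1,j]$ (justified by \autoref{thm:position_of_entries_smaller_as_attacked_entry_in_T}), and part (1) by applying \autoref{thm:mutliple_flips_same_cell} to $T_0$ with $i$ and $[i+1,j-1]$, noting that $j$ is not moved, so the defining attack $i\att_{T_0} j$ becomes $j-1\in \AD$ of the intermediate tableau. Your explicit treatment of the edge case $j=i+1$ and the verification that the attack forces $j-1$ to be a descent are details the paper leaves implicit, but the argument is the same.
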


\begin{proof}
	First observe that $\fs_{j-1} \cdots \fs_{i+1} \fs_i$ is a reduced word, i.e., $\pi_\sigma = \pi_{j-1} \cdots \pi_{i+1}\pi_i$. Set $\cell_k = T_0^{-1}(k)$ for $k=1,\dots, n$.
	
	We consider $T_0$. Set $T' = \pi_{j-2} \cdots \pi_{i+1}\pi_i T_0$. 
	We can apply \autoref{thm:mutliple_flips_same_cell} in $T_0$ on $i$ and $[i+1,j-1]$ because of \autoref{thm:i_is_descent} and \autoref{thm:position_of_entries_smaller_as_attacked_entry_in_T_0}. Doing this we obtain that $T'\in E$ and $T'(\cell_i) = j-1$. In addition, $T'(\cell_j) = T_0(\cell_j) = j$ as none of the operators $\pi_{j-2}, \dots, \pi_{i+1}$ moves $j$. Recall that $j$ is defined such that $\cell_i \att \cell_j$. Thus $j-1\in \AD(T')$ and $\pi_\sigma T_0 = 0$.

	Now consider $T$. Because of  \autoref{thm:position_of_entries_smaller_as_attacked_entry_in_T} we can apply \autoref{thm:mutliple_flips_same_cell} in $T$ on $i$ and $[i+1,j]$. This immediately gives us $(2)$ and $(3)$.
\end{proof}

\begin{exa} Continuing our running example, we have $i = 2$, $j = 4$ and $\pi_\sigma=\pi_3\pi_2$. Moreover,
	\begin{align*}
		T_0 &=
		\begin{ytableau} 
			1 \\ 
			6 & 5 & 4 & 3 \\ 
			8 & 7 & 2 \\ 
		\end{ytableau}  	
		\overset{\pi_2}{\longrightarrow}
		\begin{ytableau} 
			1 \\ 
			6 & 5 & 4 & 2 \\ 
			8 & 7 & 3 \\ 
		\end{ytableau} 	
		\overset{\pi_3}{\longrightarrow}
		0,
		\\
		T &= 
		\begin{ytableau} 
			2 \\ 
			6 & 5 & 4 & 3 \\ 
			8 & 7 & 1 \\ 
		\end{ytableau}  
		\overset{\pi_2}{\longrightarrow}
		\begin{ytableau}
			3 \\ 
			6 & 5 & 4 & 2 \\ 
			8 & 7 & 1 \\ 
		\end{ytableau}
		\overset{\pi_3}{\longrightarrow}
		\begin{ytableau}
			4 \\ 
			6 & 5 & 3 & 2 \\ 
			8 & 7 & 1 \\ 
		\end{ytableau}.
	\end{align*}
\end{exa}

We are ready to prove the main result of this paper now.

\begin{thm}	\label{thm:end:end_is_c}
	Let $\alpha \vDash n$ and $E\in \mc E(\alpha)$. Then $\End_{\He}(\bs S_{\alpha,E})=\C \id$. In particular, $\se$ is an indecomposable $\He$-module.
\end{thm}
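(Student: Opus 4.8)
The plan is to show that the source tableau $T_0$ is the only element of $E$ that can lie in $\supp(v)$, where $v = f(T_0)$; once this is established we have $v = a_{T_0} T_0$, and since $f$ is determined by its value on the cyclic generator $T_0$, it follows that $f = a_{T_0}\id \in \C\id$. First I would invoke \autoref{thm:descent_set_of_support_of_v} to reduce attention to tableaux $T \in \supp(v)$ satisfying $\D(T) \subseteq \D(T_0)$, and then argue that any such $T$ with $T \neq T_0$ must in fact have coefficient $a_T = 0$. The engine for this is the operator $\pi_\sigma$ with $\sigma = \fs_{j-1}\cdots\fs_{i+1}\fs_i$, where $i$ and $j$ are defined as in \eqref{eq:def_of_i_and_j}; the preceding lemmas have been assembled precisely to make this operator do double duty.

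The key idea is to pick the candidate tableau $T$ in $\supp(v)$ maximal with respect to $\pleq$ among those with $T \neq T_0$ (or more carefully, to run an inductive/minimal-counterexample argument), apply $f$ to the relation $\pi_\sigma T_0$, and compare coefficients. Concretely, I would compute $\pi_\sigma v = f(\pi_\sigma T_0)$. By \autoref{thm:end:annihilator_for_T_0}(1) we have $\pi_\sigma T_0 = 0$, so $\pi_\sigma v = 0$. On the other hand, expanding $\pi_\sigma v = \sum_{T' \in E} a_{T'} \pi_\sigma T'$ and using \autoref{thm:end:annihilator_for_T_0}(2)--(3), the specific tableau $T$ contributes the nonzero term $a_T \cdot (\pi_\sigma T)$ with $\pi_\sigma T \in E$. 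The delicate point is to ensure that no other tableau $T' \in \supp(v)$ can produce the same tableau $\pi_\sigma T$ under $\pi_\sigma$, so that the coefficient of $\pi_\sigma T$ in $\pi_\sigma v$ is exactly $a_T$, forcing $a_T = 0$.

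The main obstacle will be precisely this cancellation control: I must rule out that some other $T' \neq T$ in $\supp(v)$ satisfies $\pi_\sigma T' = \pi_\sigma T$ and thereby cancels the contribution of $T$. I expect to handle this by choosing $T$ to be the $\pleq$-largest tableau in $\supp(v)\setminus\set{T_0}$, or equivalently by ordering candidates via the rank function $\delta$ of \autoref{thm:E_isomorphic_to_bruhat_interval} together with \autoref{thm:rank_in_E_and_column_word}. Since \autoref{thm:end:annihilator_for_T_0}(3) tells us $\sigma = \col_{\pi_\sigma T}\col^{-1}_T$ realizes a genuine length-additive move, \autoref{thm:rank_in_E_and_column_word} gives $\delta(\pi_\sigma T) = \delta(T) + l(\sigma)$, so $\pi_\sigma T$ has strictly larger rank than $T$. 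Any competing $T'$ mapping onto the same tableau would need $\delta(T') \leq \delta(\pi_\sigma T) - 1$ but also $\pi_\sigma T' = \pi_\sigma T$; a careful rank and support comparison, possibly combined with the observation that applying $\pi_\sigma$ cannot decrease rank, should show such a $T'$ either equals $T$ or lies outside $\supp(v)$ by maximality. This step—pinning down injectivity of $\pi_\sigma$ on the relevant part of $\supp(v)$—is where the real work lies; the rest is bookkeeping with the coefficient extraction.
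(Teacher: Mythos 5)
Your proposal is correct and follows essentially the same route as the paper: reduce to tableaux with $\D(T)\subseteq\D(T_0)$ via \autoref{thm:descent_set_of_support_of_v}, pick the element $T_*$ of $\supp(v)$ of maximal rank, apply the operator $\pi_\sigma$ of \autoref{thm:end:annihilator_for_T_0} to $v=f(T_0)$, and use \autoref{thm:rank_in_E_and_column_word} to see that no other element of $\supp(v)$ can be sent to $\pi_\sigma T_*$, so that $a_{T_*}$ survives as a coefficient of $\pi_\sigma v = f(\pi_\sigma T_0)=0$. The one detail to fix in your sketch is that the injectivity step wants $T_*$ of maximal \emph{rank} $\delta$ in $\supp(v)$ rather than merely $\pleq$-maximal in $\supp(v)\setminus\set{T_0}$, since the chain $l(\sigma)\geq \delta(\pi_\sigma T)-\delta(T) \geq \delta(\pi_\sigma T_*)-\delta(T_*)=l(\sigma)$, which forces $\col_{T}=\col_{T_*}$ and hence $T=T_*$, relies on $\delta(T)\leq\delta(T_*)$ for every competing $T\in\supp(v)$.
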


\begin{proof}  For the second part note that if  $\End_{\He}(\bs S_{\alpha,E})=\C \id$ then clearly $\se$ is indecomposable. 
	
	To prove the first part, let $f\in \End_{\He}(\se)$, $v:=f(T_0)$ and $v = \sum_{T\in E} a_T T$ as before.
	We show $\supp(v) \subseteq \{T_0\}$ since this and the fact that $\se$ is cyclically generated by $T_0$ imply $f = a_{T_0}\id\in \C \id$.
	
	If $v=0$ this is clear so that we can assume $v\neq 0$.
	Let $T_*\in \supp(v)$ be of maximal degree in $E$. Assume for sake of contradiction that $T_* \neq T_0$. Then \autoref{thm:descent_set_of_support_of_v} yields $\D(T_*) \subseteq  \D(T_0)$. Hence \autoref{thm:end:annihilator_for_T_0} provides the existence of $\sigma \in \fS_n$ such that $\pi_\sigma T_*\in E$, $\pi_\sigma T_0 = 0$ and $\sigma = \col_{\pi_\sigma T_*}\col_{T_*}^{-1}$.
	
	We claim that if $T\in \supp(v)$ and $\pi_\sigma T = \pi_\sigma T_*$ then $T = T_*$.
	To see this, let  $T\in \supp(v)$ be such that $\pi_\sigma T = \pi_\sigma T_*$. Then
		\begin{alignat*}{3}
			l(\sigma) 
			\geq \delta(\pi_\sigma T) - \delta(T)
			= \delta(\pi_\sigma T_*) - \delta(T) 
			\geq \delta(\pi_\sigma T_*) - \delta(T_*) 
			= l(\sigma)
		\end{alignat*}
		where \autoref{thm:rank_in_E_and_column_word} is used to establish the first inequality and the last equality. Hence, 	$l(\sigma)= \delta(\pi_\sigma T) - \delta(T)$ and another application of \autoref{thm:rank_in_E_and_column_word} yields $\col_{\pi_\sigma T_*}\col^{-1}_{T} = \sigma$. But then $\col_T = \col_{T_*}$ so that $T = T_*$ as claimed.
		
	The claim implies that the coefficient of $\pi_\sigma T_*$ in $\pi_\sigma v = \sum_{T \in \supp(v)} a_T \pi_\sigma T$ is $a_{T_*}$.	
	Yet, $\pi_\sigma v = f(\pi_\sigma T_0) = 0$ and hence $a_{T_*} = 0$ which contradicts the assumption $T_* \in \supp(v)$ and completes the proof of $\supp(v) \subseteq \set{T_0}$.
\end{proof}

Combining  \autoref{thm:end:end_is_c} with \autoref{thm:decomposition_in_equivalence_classes},  we obtain the desired decomposition of $\sa$.

\begin{cor}
	Let $\alpha \vDash n$. Then $\sa = \bigoplus_{E\in \mc E(\alpha)} \se$ is a decomposition into indecomposable submodules.
\end{cor}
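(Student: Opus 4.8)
The plan is simply to combine the two structural results established above; at this point all the substantive work has already been done. First I would invoke \autoref{thm:decomposition_in_equivalence_classes} in its straight specialization. That proposition is stated for an arbitrary skew composition $\ab$, and taking $\beta = \emptyset$ (so that $\ab = \alpha$ and $\sab = \sa$) yields directly the $\He$-module decomposition $\sa = \bigoplus_{E\in \mc E(\alpha)} \se$. This already exhibits $\sa$ as an internal direct sum of the submodules $\se$, one for each equivalence class $E\in \mc E(\alpha)$ under $\sim$, so the only remaining point is that each summand is indecomposable.

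Second, I would appeal to \autoref{thm:end:end_is_c}, which computes $\End_{\He}(\se) = \C\id$ for every $E\in \mc E(\alpha)$. The passage from this to indecomposability is the standard fact that a finite-dimensional module over a finite-dimensional algebra is indecomposable if and only if its endomorphism ring has no idempotents other than $0$ and $\id$. Since $\C\id \cong \C$ is a field, its only idempotents are $0$ and $\id$, so each $\se$ is indecomposable; indeed this is recorded as the ``in particular'' clause of \autoref{thm:end:end_is_c} itself, so I can cite it without reproving it. Putting the two statements side by side gives the corollary: $\sa = \bigoplus_{E\in \mc E(\alpha)} \se$ is a decomposition of $\sa$ into indecomposable $\He$-submodules.

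I do not expect any genuine obstacle at this final step — the proof is a one-line synthesis, and the entire difficulty of the argument was absorbed earlier, into the determination of the endomorphism rings in \autoref{thm:end:end_is_c} and into the combinatorial lemmas feeding it (the control of the maximal differing entry $i$ and the attacked entry $j$ in \autoref{thm:i_is_descent} through \autoref{thm:position_of_entries_smaller_as_attacked_entry_in_T}, together with the annihilation and column-word properties of $\pi_\sigma$ in \autoref{thm:end:annihilator_for_T_0}). The one thing worth flagging explicitly when writing it up is that \autoref{thm:decomposition_in_equivalence_classes} is being applied to a straight shape, which is immediate, and that ``indecomposable'' here is meant in the module-theoretic sense for which the triviality of the idempotents in $\End_{\He}(\se)$ is exactly the right criterion.
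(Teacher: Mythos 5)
Your proposal is correct and matches the paper exactly: the corollary is obtained by combining \autoref{thm:decomposition_in_equivalence_classes} (specialized to $\beta=\emptyset$) with the indecomposability of each $\se$ from \autoref{thm:end:end_is_c}. No further comment is needed.
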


\begin{exa}
	\label{exa:decomposable_module}
	 In general, \autoref{thm:end:end_is_c} does not hold for skew modules $\sse$.
	The two tableaux
	\begin{align*}
	T_0 =
	\begin{ytableau}
	1 \\
	*(gray) & *(gray) & 2
	\end{ytableau}
	\overset{\pi_1}{\longrightarrow}
	T_1 = 
	\begin{ytableau}
	2 \\
	*(gray) & *(gray) & 1
	\end{ytableau}
	\end{align*}
	form an equivalence class $E$. Let $n=2$ and $\ab=\sh(T_0)$. 
	Observe that we obtain an idempotent $\He$-endomorphism $\varphi$  by setting $\varphi(T_0) = \varphi(T_1) = T_1$.
	 Clearly, $\varphi$ is none of the trivial idempotents $0,\id\in \End_{\He}(\sse)$. Thus, $\End_{\He}(\sse)\neq \C\id$. Moreover, we obtain a decomposition
	\begin{align*}
	\sse = \varphi(\sse) \oplus (1-\varphi) (\sse) = \spa_\C(T_1) \oplus \spa_\C(T_0-T_1)
	\end{align*} 
	in two submodules of dimension $1$.
	
	This example also illustrates how the argumentation of this section can fail when it is applied to skew modules. Note that $D(T_1) \subseteq D(T_0)$. So, we may try to set
	 \begin{align*}
		i &= \max \big\{ k \in [n] \mid T^{-1}(k) \neq T^{-1}_0(k) \big\}, \\
		j &= \min \big\{k \in [n] \mid k>i \text{ and } i\att_{T_0} k \big\}.
	\end{align*}
	as before.
	But then $i=2$ so that $j$ does not exist. 
\end{exa}

\begin{ack} 
	The new results presented in this article are part of my PhD research, and I would like to thank my supervisor Christine Bessenrodt for her support especially during the work on this paper.
\end{ack}

% --- References --- 

\providecommand{\bysame}{\leavevmode\hbox to3em{\hrulefill}\thinspace}


\begin{thebibliography}{99}
	
	\bibitem{Berg2014}
	C.~Berg, N.~Bergeron, F.~Saliola, L.~Serrano, and M.~Zabrocki, \emph{A lift of
		the {Schur} and {Hall}-{Littlewood} bases to non-commutative symmetric
		functions}, Canad. J. Math. \textbf{66} (2014), 525--565.
	
	\bibitem{Berg2015}
	\bysame, \emph{Indecomposable modules for the dual immaculate basis of
		quasi-symmetric functions}, Proc. Amer. Math. Soc. \textbf{143} (2015),
	991--1000.
	
	\bibitem{Bessenrodt2011}
	C.~Bessenrodt, K.~Luoto, and S.~van Willigenburg, \emph{Skew quasisymmetric
		{Schur} functions and noncommutative {Schur} functions}, Adv. Math.
	\textbf{226} (2011), 4492 -- 4532.
	
	\bibitem{Bjorner2006}
	A.~Bj{\"o}rner and F.~Brenti, \emph{Combinatorics of {C}oxeter groups}, vol.
	231, Springer Science \& Business Media, 2006.
	
	\bibitem{Duchamp1996}
	G.~Duchamp, D.~Krob, B.~Leclerc, and J.~Y. Thibon, \emph{Fonctions
		quasi-sym\'etriques, fonctions sym\'etriques non commutatives et alg\`ebres
		de {H}ecke \`a {$q=0$}}, C. R. Acad. Sci. Paris S\'er. I Math. \textbf{322}
	(1996), 107--112.
	
	\bibitem{Gessel1984}
	I.~M. Gessel, \emph{Multipartite {$P$}-partitions and inner products of skew
		{S}chur functions}, Combinatorics and {A}lgebra ({B}oulder, {C}olo., 1983),
	Contemp. Math., vol.~34, Amer. Math. Soc., Providence, RI, 1984,
	pp.~289--317.
	
	\bibitem{Grinberg2014}
	D.~Grinberg and V.~Reiner, \emph{Hopf algebras in combinatorics},
	arXiv:1409.8356.
	
	\bibitem{Haglund2011}
	J.~Haglund, K.~Luoto, S.~Mason, and S.~van Willigenburg, \emph{Quasisymmetric
		{Schur} functions}, J. Combin. Theory Ser. A \textbf{118} (2011), 463--490.
	
	\bibitem{Mathas1999}
	A.~Mathas, \emph{{Iwahori}-{Hecke} algebras and {Schur} algebras of the
		symmetric group}, vol.~15, American Mathematical Soc., 1999.
	
	\bibitem{Stanley1999}
	R.~P. Stanley, \emph{Enumerative combinatorics. {V}ol. 2}, Cambridge University
	Press, Cambridge, 1999.
	
	\bibitem{Tewari2015}
	V.~Tewari and S.~van Willigenburg, \emph{Modules of the 0-{H}ecke algebra and
		quasisymmetric {S}chur functions}, Adv. Math. \textbf{285} (2015),
	1025--1065.
	
\end{thebibliography}
\end{document}